\begin{document}

\title{
{Improvements in the  estimation of the Weibull tail coefficient--a comparative study}
}

\author[1,2]{L\'{i}gia Henriques-Rodrigues}

\author[3,4]{Frederico Caeiro*}

\author[5,6]{M. Ivette Gomes}

\authormark{\textsc{Henriques-Rodrigues et al.}}

\address[1]{\orgdiv{Department of Mathematics, School of Sciences and Technology}, \orgname{University of \'Evora}, \orgaddress{\state{\'Evora}, \country{Portugal}}}

\address[2]{\orgdiv{Research Center in Mathematics and Applications (CIMA)}, \orgname{University of \'Evora}, \orgaddress{\state{\'Evora}, \country{Portugal}}}

\address[3]{\orgdiv{NOVA School of Science and Technology (FCT NOVA)}, \orgname{NOVA University Lisbon}, \orgaddress{\state{Caparica}, \country{Portugal}}}

\address[4]{\orgdiv{Center for Mathematics and Applications
(CMA)}, \orgname{NOVA University Lisbon}, \orgaddress{\state{Caparica}, \country{Portugal}}}

\address[5]{\orgdiv{DEIO, Faculty of Sciences }, 
\orgname{University of  Lisbon (FCUL)}, \orgaddress{\state{Lisbon}, \country{Portugal}}}

\address[6]{\orgdiv{Center of Statistics and Applications}, \orgname{University of Lisbon (CEAUL)}, \orgaddress{\state{Lisbon}, \country{Portugal}}}

\corres{*Frederico Caeiro, Faculdade de Ci\^encias e Tecnologia, 2829-516 Caparica, Portugal.\ \email{fac@fct.unl.pt}}
%


\abstract[Abstract]{
{The Weibull tail-coefficient (WTC) plays a crucial role in extreme value statistics when dealing with Weibull-type tails.}
Several distributions, such as normal, Gamma, Weibull, and Logistic distributions, exhibit this type of tail {behaviour}.
The WTC, denoted by $\theta$, is a parameter in a right-tail function of the form
{$ \overline F(x) :=1-F(x) =: {\rm e}^{-H(x)}$, where $H(x)=-\ln(1-F(x))$ represents a regularly varying cumulative hazard function with an index of regular variation equal to 1/$\theta$,  $\theta\in\mathbb{R}^{+}$}. 
The commonly used {WTC-estimators} in literature 
are often defined as functions of the log-excesses, making them closely related to estimators of the extreme value index (EVI) for Pareto-type tails. For a positive EVI, the classical estimator is the {Hill estimator.} 
Generalized means  have been successfully employed in estimating the EVI, leading to {reduction of  bias and of} root mean square error for appropriate threshold values. In this study, we propose and investigate new classes of {WTC-estimators} based on power $p$ of the log-excesses 
within a second-order framework.
The performance of these new estimators is evaluated through a large-scale Monte-Carlo simulation study, comparing them with existing WTC-estimators available in the literature. 
}

\keywords{Log-excesses, Monte-Carlo simulation, Power mean-of-order $p$, Semi-parametric estimation, Statistics of Extremes, Weibull tail coefficient}



\maketitle

\section{Introduction and scope of the article}\label{sec1}

Let  $\underline{X}_n=(X_1,\ldots, X_n)$ denote a random sample of $n$ {{\it independent, identically distributed} (IID)}  or even stationary weakly dependent {{\it random variables} (RV’s)} with {{\it cumulative distribution function} (CDF)}  $F(x)=\mathbb{P}(X \leq x)$. We denote by $(X_{1:n}\leq \dots \leq X_{n:n})$ the sample of ascending {{\it order statistics} (OS’s) associated with} the available random sample $\underline{X}_n$. Let us {further 
assume}  that there exist sequences of real constants $\{a_{n}>0\}$ and $\{b_{n}\in\mathbb{R}\}$ such that, for the maximum $X_{n:n}:=\max(X_1, \dots, X_n)$, linearly normalized,
		\begin{equation}\nonumber
		\mathbb{P}\left(\frac{X_{n:n}-b_{n}}{a_{n}} \leq x\right) \mathop{\longrightarrow}^w_{{_{n\rightarrow\infty}}} {\rm EV}_{\xi}(x), 	
		\end{equation}
with $\rm{EV}_{\xi}(x)$ a non-degenerate limit distribution necessarily of the type of the {{\it extreme value}  {($EV$)} CDF} {(Fisher and Tippett\cite{fisher1928limiting}, Gnedenko \cite{gnedenko1943})}
		\begin{equation}\label{EV}
{EV}_{\xi}(x):=\left\{\begin{array}{lll}
			\exp\left(-(1+\xi x)^{-1/\xi}\right),\quad 1+\xi x >0, & \mbox{if} & \xi\not=0\\
			\exp(-\exp(-x)), \quad x\in\mathbb{R}, & \mbox{if} & \xi =0.
		\end{array}
		\right.
	\end{equation}
\vskip 0.3cm 
The {CDF} $F$ is said to belong to the domain of attraction for maxima of  {$EV_\xi$}, with $\xi\in\mathbb{R}$, and we write {$F\in{\cal D_M}(EV_\xi)$}. The parameter $\xi$  is the {{\it extreme-value index}} (EVI), one of the primary parameters of extreme events.  The EVI{,  $\xi$}, measures essentially the weight of the  right-tail $\overline F:=1-F$.
{If $\xi < 0$, the right tail is light, of a max-Weibull-type,
\begin{equation}
\Psi_\alpha(x)=\exp(-(-x)^\alpha),\quad  x\leq 0 \qquad (\alpha>0,~ \xi=-1/\alpha)
\label{Psi-alpha}
\end{equation} 
and $F$ has a finite right endpoint $(x^F := \sup\{x : F(x) < 1\} < +\infty)$. If $\xi> 0$, the right-tail is heavy, of a negative  polynomial type or of a Fréchet-type, 
\begin{equation*}
\Phi_\alpha(x)=\exp(-x^{-\alpha}), \quad x\geq 0 \qquad (\alpha>0, ~ \xi=1/\alpha), 
\label{Phi-alpha}
\end{equation*} 
and F has an infinite right endpoint. If $\xi= 0$, the right tail is of an exponential or Gumbel-type,
\begin{equation*}
\Lambda(x)=\exp(-\exp(-x)),\quad x\in \mathbb{R}  \qquad (\xi=0),
\label{Lambda}
\end{equation*} 
and the right endpoint can be either finite or infinite.} 
 The last case will be of interest in this paper and we shall work with light tailed distributions in the Gumbel domain of attraction exhibiting a Weibull type tail, i.e., 
 \begin{equation}\label{Weibull_tail}
\lim_{t\to \infty} \frac{\ln (1-F(c\,t))}{\ln (1-F(t))}=c^{1/\theta},
 \end{equation}
for all $c>0$ and where $\theta>0$ is the {{\it Weibull tail coefficient}} (WTC). Denoting by {${\cal RV}_{\alpha}$ the class of regularly varying functions at infinity with index $\alpha$} {(see Bingham {\it et al.} \cite{bingham1989regular})}, i.e., positive measurable function $g(.)$ such that $\lim_{t\to \infty} g(tx)/g(t)=x^{\alpha}$, for all $x >0$, then with {$H(t):=-\ln(1-F(t))$ the} cumulative hazard function, $H(t) \in \mathcal{RV}_{1/\theta}$, for $\theta>0$ and we may also write that {$\overline{F}(t)=e^{-H(t)}$}. This type of models are very important to model large claims in non-life insurance cases \cite{beirlant1992}, hydrology, meteorology \cite{dewet:hal-01312912} and include, among others the Weibull, the Gamma, the Logistic and the Normal distributions. 
{Indeed, the WTC is the parameter $\theta$ used in a Weibull CDF  (one of the possible limiting distributions of linearly normalized minima), of the type,
\begin{equation}
\Psi_\theta^\ast(x)=1-\Psi_\theta(-x), \quad x\geq 0,
\label{Psi-alpha-ast}
\end{equation}
with $\Psi_\alpha(x)$ given in \eqref{Psi-alpha}. The Weibull CDF  $\Psi^\ast_\theta((x-\lambda)/\delta), x\geq \lambda$, with $\Psi_\theta^\ast(x)$ given in \eqref{Psi-alpha-ast}, and $(\lambda\in\mathbb{R}, \delta>0)$, a vector of location and scale parameters,    is a CDF commonly employed in reliability engineering and survival analysis. The Weibull CDF is flexible and can model a wide range of failure times. In applications, such as reliability engineering, the WTC estimation is important for modeling the failure characteristics of a system accurately, understanding whether the system has a decreasing $(\theta<1)$, constant ($\theta=1$), or increasing ($\theta>1$) failure rate over time, being this crucial for making informed decisions about maintenance, replacement, and other reliability-related issues. { With an accurate estimation of the WTC, the estimation of other relevant parameters can be performed, such as the estimation of high quantiles and the estimation of probabilities of exceeding levels that surpass the observed values, which are also important in the decision-making process.}  Rather than placing ourselves under a parametric framework, and beyond it, a semi-parametric framework can be considered, and a more reliable estimation of the WTC is then possible, unless the fitting of the Weibull CDF is `almost perfect’. We thus suggest the use of such a semi-parametric framework} for the estimation of the WTC, where we will consider the $k$ upper observations above the random threshold $X_{n-k:n}$ and work with intermediate sequences of OS’s{, i.e.\ a value of $k$} such that 
\begin{equation}\label{intermediate}
k=k_n \to \infty, \quad k\in [1,n) \quad \mbox {and} \quad k=o(n), \quad \mbox{as} \quad n\to \infty.
\end{equation}
Let $F^\leftarrow(t) :=\inf\left\{x:F(x)\geq t\right\}$ denote the generalized inverse function of $F$ and $U(t)$ the {\it reciprocal right-tail quantile function} (RTQF), defined as 
\begin{equation}
U(t):=F^\leftarrow(1-1/t),\quad t\geq 1.    
\label{eq_quantile_function_U}
\end{equation}
For Weibull type-tails the following relations hold true for the RTQF of $H$, 
\begin{equation}\label{FOC}
U(e^t)=H^\leftarrow (t) \in RV_\theta \iff U(t)=(\ln  t)^\theta L(\ln t),
\end{equation}
with $L$ a slowly varying function, i.e., $L \in \mathcal{RV}_0$. The conditions \eqref{intermediate} and the {{\it first order condition}} (FOC), in \eqref{FOC}, are used in the semi-parametric estimation of the WTC to prove the consistency of the estimators. To derive the non-{degenerate} behaviour of the WTC-estimators we have to assume the validity of a {\it second-order condition} (SOC) that rules the rate of convergence of the slowly varying function $L(t)$, in \eqref{FOC},
\begin{equation}\label{SOC}
\lim_{t\to \infty} \frac{\ln L(tx)-\ln L(t)}{B(t)}=\frac{x^\beta -1}{\beta},
\end{equation}
where $\beta \leq 0$ and $|B| \in \mathcal{RV}_{\beta}$. The rate of convergence increases when $\beta$ decreases. Among the several models that satisfy the SOC, in \eqref{SOC}, we refer to Table 1 in {Gardes and Girard\cite{gardes2008estimation} and Dierckx {\it et al.} \cite{dierckx2009anewestimation}}, where values of $\beta$ and functions $B(t)$ are presented for some models with Weibull type tail. Apart from the Weibull distribution, where $\beta=-\infty$ {($B(t)=0$)}, all the other models presented have $\beta=-1$. Observing the behaviour of the function $B(t)$ that controls the bias, we can consider three types of models: (1) models where $B(t)=0$, as the Weibull{, which generalizes the standard Exponential model}; (2) models with $B(t) \propto t^\beta=t^{-1}$, as the Logistic and Gumbel models; (3) models with $B(t)\propto t^\beta \ln t=\ln t /t$, as the Gaussian and Gamma models. From Remark 1, in  {Caeiro {\it et al.}}\cite{caeiroetal-wtcmop}, and under EVI-estimation, the EVI, $\xi$, and the second-order shape parameter, $\rho$, are both null corresponding to Gumbel's type {right-tails} ($\Lambda(x)=\exp(-\exp(-x))), x \in \mathbb{R}$) that {entail} a penultimate behaviour (see {Gomes}\cite{gomes1978some,gomes2004penultimate} and  {Gomes and de Haan}\cite{gomes1999approximation} for further details) of the {Fréchet-type} if $\theta>1$ or the {max-Weibull-type} if $\theta<1$. 

\subsection{Semi-parametric EVI- and WTC-estimators}

Several approaches have been devised for the {semi-parametric} estimation of the WTC. Among them, we mention the pioneer work of {Berred}\cite{berred1991record}, where record values were used, and the works of {Broniatowsk\cite{broniatowski1993ontheestimation}, Beirlant {\it et al.}\cite{beirlant1995themean} and Beirlant and Teugels\cite{beirlant1996pratical}}, where the WTC-estimation was performed based on adequate functions of the {OS’s}. In this paper we are going to work with  two classes of WTC-estimators based on the log-excesses, 
\begin{equation}\label{Vik}
V_{ik}:= \ln \frac{X_{n-i+1:n}}{X_{n-k:n}}=\ln X_{n-i+1:n} - \ln X_{n-k:n}, \quad 1 \leq i \leq k <n, 
\end{equation}
and on the relative excesses, $U_{ik}$,  
\begin{equation}\label{Uik}
U_{ik}:= \frac{X_{n-i+1:n}}{X_{n-k:n}}, \quad 1 \leq i \leq k <n. 
\end{equation}
The log-excesses in \eqref{Vik} and the relative excesses in \eqref{Uik} are core functions of several {EVI-estimators. One of such classic estimators is the Hill estimator}  (H) \cite{hill1975}, which is given by the average of the {log-excesses, or equivalently, by the logarithm of the geometric mean of the relative excesses},
\begin{equation}\label{hill}
{\rm H}(k):=\frac{1}{k}\sum_{i=1}^k V_{ik}=\ln \left(\prod_{i=1}^k U_{ik} \right)^{1/k}, \quad 1 \leq k <n. 
\end{equation}
The asymptotic behaviour of the Hill estimator, in \eqref{hill}, and the trade-off between bias and variance exhibited by this classic EVI-estimator {has led}  researchers to improve the Hill estimator by considering new EVI-estimators based on the concept of {{\it generalized means} ({GM’s})}.

The literature {related to} GM EVI-estimators is diverse and several types of {GM’s} have been considered. In our study we will focus on two particular classes of {GM’s} based on the log-excesses, in \eqref{Vik}, and on the relative excesses, in \eqref{Uik}.  The first GM considered is the power mean of exponent $p$ of the log-excesses, PM$_p$, introduced in  {Gomes and Martins\cite{gomesmartins1999}}, also considered in {Segers\cite{segers2001residual}} and further studied in  {Gomes and Martins\cite{gomesmartins2001}}. This class of EVI-estimators is defined by 
\begin{equation}
	{\rm PM}_{k,n,p}\equiv 
	{\rm PM}_p(k):= 
	\left(	\frac{M_{k,n}^{(p)}}{\Gamma(p+1)}\right)^{1/p} \,\,, \quad {\rm for} \,\, {p>-1,\quad p\neq 0}
	\label{PM-EVI}
\end{equation}
with ${\rm PM}_1(k) \equiv {\rm H}(k)$, in \eqref{hill}, $\Gamma$ denoting the complete Gamma function and 
$$ M_{n,k}^{(p)}:= \frac{1}{k}\sum_{i=1}^k\left( \ln  {X_{n-i+1:n}}-\ln {X_{n-k:n}}\right)^p = \frac{1}{k}\sum_{i=1}^k V_{ik}^p,$$
with $V_{ik}$ defined in \eqref{Vik}. For further details on the asymptotic behaviour and the finite sample properties of the PM$_p$ EVI-estimators {see {Gomes and Martins} \cite{gomesmartins2001}.}

The second GM to be considered, the power mean-of-order $p$, denoted as H$_p$, was almost simultaneously introduced in 
Brilhante {\it et al.}\cite{brilhante2013asimple}, Paulauskas and Vai{\v{c}iulis}\cite{paulauskas2013improvement} 
and Beran {\it et al.}\cite{beran2013harmonic} (see also Segers\cite{segers2001residual} and Caeiro {\it et al.}\cite{cgbw2016}).
{The transformed-Hill (t-hill) estimator\cite{Stehlik2009,jordanova2012weak} is a special case of the GM H$_p$, where $p=-1$.}  
The functional form of this class of EVI-estimators is 
\begin{equation}
{\rm H}_{p}(k) :=\left\{
\begin{array}{cll}
\frac{1- \left(\frac{1}{k}\sum\limits_{i=1}^k  U_{ik}^p\right)^{-1}}{p},\quad  & \mbox{if} & p <1/\xi,\ p\neq 0,\\
{\rm H}(k), & \mbox{if} & p=0,
\end{array}
\right.
\label{Hill-p}
\end{equation} 
with $U_{ik}$ defined in \eqref{Uik}.
{For heavy-tailed models}, consistency and asymptotic normality are achieved under adequate conditions for values of $p<1/\xi$ and $p<1/(2\xi)$, respectively. The non-normal asymptotic behaviour of H$_p$ for $1/(2\xi)<p\leq 1/\xi$ was recently derived in 
{Gomes {\it et al.}}\cite{gomesetal-nonregular}.

The dependence on a tuning parameter $p$ makes the classes of EVI-estimators in \eqref{PM-EVI} and \eqref{Hill-p} very flexible allowing for a reduction in both bias and {\it root mean square error} (RMSE) for the EVI-estimation, for values of $p$ properly chosen. We will use these classes of GM EVI-estimators in the semi-parametric estimation of the WTC, improving the finite sample behaviour of the classic WTC-estimators.\\

The first class of classic semi-parametric WTC-estimators considered in this work was proposed in {Beirlant {\it et al.}}\cite{beirlant1996pratical} and further studied in {Girard\cite{girard2004} and Gardes and Girard\cite{gardes2006comparison}}, 
\begin{equation}\label{WTC-G}
    	\widehat \theta^{\rm G}(k):= \frac{ 
		{\rm H}(k)
	}{\frac{1}{k}\sum_{i=1}^k 
		\ln{\ln((n+1)/i)}-\ln{\ln((n+1)/(k+1))}}, \quad 1\leq k \leq n-1,
\end{equation}
with H$(k)$ the Hill estimator in \eqref{hill}.  In {Gardes and Girard}\cite{gardes2006comparison}, the authors studied the class of WTC-estimators of the type, 
\begin{equation}\label{WTC-est}
\widehat \theta (k):= \frac{{\rm H}(k)}{T_n} ,
\end{equation}
a Hill type WTC-estimators with three different choices for the positive sequence, $T_n$. The $\widehat \theta^{\rm G}(k)$ estimator, in \eqref{WTC-G}, is a member of the Hill type estimators in \eqref{WTC-est}  considering $T_n=\frac{1}{k}\sum_{i=1}^k 	\ln{\ln((n+1)/i)}-\ln{\ln((n+1)/(k+1))}$.
    
Generalizations and further improvements of the $\widehat \theta^{\rm G}(k)$ estimator can be found in {Gardes and Girard\cite{gardes2008estimation} and Goegebeur {\it et al.}\cite{goegebeur2010generalized}}. 
The second class considered in this paper was also studied in  {Gardes and Girard}\cite{gardes2006comparison} and has a functional form as in \eqref{WTC-est} {with $T_n=1/\ln(n/k)$,}
\begin{equation}\label{WTC-GG}
   \widehat \theta^{\rm GG} (k):= \ln(n/k) {\rm H}(k)
   , \quad 1\leq k \leq n-1,
\end{equation}
with H$(k)$ the Hill estimator in \eqref{hill}.\\

A first approach to the use of GM’s in the WTC-estimation was recently performed in {Caeiro {\it et al.}}\cite{caeiroetal-wtcmop, wtc-spe}, where the class of GM EVI-estimators H$_p$, in \eqref{Hill-p}, was plugged into the class of WTC-estimators in \eqref{WTC-GG}, 
\begin{equation}\label{WTC-Mop-GG}
       \widehat \theta_p^{\rm GG} (k):= \ln(n/k) {\rm H}_p(k), \quad 1\leq k \leq n-1, \quad {p\in \mathbb{R}}
\end{equation}
with $\widehat \theta_0^{\rm GG} (k) \equiv \widehat \theta^{GG} (k)$ in \eqref{WTC-GG}. The asymptotic properties of the new WTC-estimators were derived and the finite sample behaviour was assessed through a {Monte-Carlo} simulation study. For this class it was always possible to find a negative value of $p$ that enabled a reduction in bias and {RMSE}, for appropriate values of the threshold, $k$. 

In this work we are going to extend the previous results in {Caeiro {\it et al.}}\cite{caeiroetal-wtcmop, wtc-spe}, studying the asymptotic and finite sample behaviour of the following WTC-estimators:
\begin{eqnarray}
     	\widehat \theta^{\rm G}_p(k)&:=& 
      \frac{ 
		{\rm H}_p(k)}{\frac{1}{k}\sum_{i=1}^k 
		\ln{\ln((n+1)/i)}-\ln{\ln((n+1)/(k+1))}}, \quad 1\leq k \leq n-1, \quad {p\in \mathbb{R}},
  \label{WTC-Mop-G}\\
     \widetilde \theta_p^{GG} (k)&:= &\ln(n/k) {\rm PM}_p(k), \quad 1\leq k \leq n-1, \quad {p>-1,\quad p\neq 0,} 
     \label{WTC-PM-GG}
\end{eqnarray}
and 
\begin{equation}\label{WTC-PM-G}
 	\widetilde \theta^{\rm G}_p(k):= \frac{ 
		{\rm PM}_p(k)
	}{\frac{1}{k}\sum_{i=1}^k 
		\ln{\ln((n+1)/i)}-\ln{\ln((n+1)/(k+1))}}, \quad 1\leq k \leq n-1,\quad {p>-1,\quad p\neq 0,}
\end{equation}
with 
$\widehat \theta^{\rm G}_0(k)\equiv \widehat \theta^{\rm G}(k)$, in \eqref{WTC-G},\ 
$\widetilde \theta_1^{\rm GG} (k) \equiv \widehat \theta^{\rm GG} (k)$, in \eqref{WTC-GG},\ and\ 
$\widetilde \theta_1^{\rm G} (k) \equiv \widehat \theta^{\rm G} (k)$, in \eqref{WTC-G}. \\

In Section 2, we present {the asymptotic non-degenerate behaviour of the WTC-estimators under study assuming the validity of an adequate SOC and working with intermediate sequences, as in \eqref{intermediate}. {Particular classes of models, with  $B(t)=\alpha t^{-1}$, $\alpha \in \mathbb{R}$, and $B(t)=0$, are also considered and their asymptotic behaviour is derived}. For these classes of models some considerations are drawn {related to} the optimal values of $p$, i.e., the values of $p$ that cancel the asymptotic bias leading to reduced bias WTC-estimators. The asymptotic {MSE} of the estimators under study are also presented and some comparisons are made. Next, in Section 3, a {Monte-Carlo} simulation study is carried out to assess the performance of the WTC-estimators under study. The mean values, {RMSE’s}, simulated optimum levels, i.e., the $k$-levels that minimize the {RMSE’s} and the correspondent simulated optimum mean values and optimum {RMSE’s} are computed. Finally, in Section 4, we present some overall conclusions of this study.}

\section{Asymptotic non-degenerate behaviour}

In this section, and in order to derive the asymptotic normality of the WTC-estimators under study we assume the validity of the {SOC} in \eqref{SOC} and work with intermediate sequences of positive integers, $k$, as in \eqref{intermediate}.

\subsection{Asymptotic behaviour of the WTC-estimators under study}

In this subsection we present the limit distributions of the {WTC-estimators} under study, defined in \eqref{WTC-Mop-GG}, \eqref{WTC-Mop-G}, \eqref{WTC-PM-GG} and \eqref{WTC-PM-G}. The next theorem presents the asymptotic behaviour of the WTC-estimator $\widehat\theta^{GG}_p(k)$ derived in {Caeiro {\it et al.}}\cite{caeiroetal-wtcmop}. 

\begin{theorem}[Caeiro {\it et al.}\cite{caeiroetal-wtcmop}]\label{Theo-wtcmopGG}
    Under a second-order framework, i.e., assuming that condition 
    \eqref{SOC} holds, with $k$ an intermediate sequence, as in \eqref{intermediate}, the WTC-estimator $\widehat\theta^{\rm GG}_p(k)$, in \eqref{WTC-Mop-GG}, has an asymptotic distributional representation of the type,
	\begin{equation}\label{WTC-MOP-GG_dist}
		\widehat\theta^{\rm GG}_p(k)\stackrel{d}{=} \theta \left\{1+\frac{P_k}{\sqrt k}+\left(\frac{B\left(\ln (n/k)\right)}{\theta}-\frac{1}{\ln(n/k)} 
		\right)(1+{o_{_{\mathbb{P}}}}(1))\right\}, \quad{p \in {\mathbb R},}
	\end{equation}
 with $P_k$ a standard normal RV, frequently denoted by $\mathcal{N}(0,1)$. 
\end{theorem}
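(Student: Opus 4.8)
The plan is to reduce the analysis of $\widehat\theta^{\rm GG}_p(k)=\ln(n/k)\,{\rm H}_p(k)$ to that of the Hill-based statistic $\ln(n/k)\,{\rm H}(k)$ (the case $p=0$), and then to derive a distributional representation for the latter from the R\'enyi representation of the order statistics combined with the SOC \eqref{SOC}.

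\emph{Step 1 (linearisation in $p$).} For Weibull-type tails the log-excesses $V_{ik}$ in \eqref{Vik} are $O_{\mathbb{P}}(1/\ln(n/k))=o_{\mathbb{P}}(1)$ over the bulk of the indices $i\le k$, so I would expand $U_{ik}^{p}=e^{pV_{ik}}=1+pV_{ik}+\frac{p^{2}}{2}V_{ik}^{2}+\cdots$ inside \eqref{Hill-p}, which gives
\[
{\rm H}_{p}(k)={\rm H}(k)+p\Bigl(\tfrac12 M_{n,k}^{(2)}-{\rm H}^{2}(k)\Bigr)+\cdots .
\]
Since $M_{n,k}^{(2)}$ and ${\rm H}^{2}(k)$ are both asymptotically $\theta^{2}/(\ln(n/k))^{2}$, with leading constants $2$ and $1$ respectively, the bracket cancels at leading order and is $o_{\mathbb{P}}\bigl({\rm H}(k)/\ln(n/k)\bigr)$; hence the tuning parameter $p$ contributes only to the remainder term of \eqref{WTC-MOP-GG_dist}, and it suffices to treat $p=0$.

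\emph{Step 2 (representation of $\ln(n/k)\,{\rm H}(k)$).} Writing $X_{n-i+1:n}\stackrel{d}{=}U(Y_{n-i+1:n})$ with $\{Y_{i}\}$ standard Pareto, the FOC \eqref{FOC} gives $\ln U(t)=\theta\ln\ln t+\ln L(\ln t)$, so with $Z_{j}:=\ln Y_{j}$ (i.i.d.\ standard exponential) and $W_{ik}:=Z_{n-i+1:n}/Z_{n-k:n}$,
\[
V_{ik}=\theta\ln W_{ik}+\bigl(\ln L(Z_{n-k:n}W_{ik})-\ln L(Z_{n-k:n})\bigr).
\]
R\'enyi's representation gives $Z_{n-i+1:n}-Z_{n-k:n}\stackrel{d}{=}\sum_{l=i}^{k}E_{l}/l=:S_{ik}$, with $\{E_{l}\}$ i.i.d.\ standard exponential independent of $Z_{n-k:n}$, and $Z_{n-k:n}=\ln(n/k)\bigl(1+o_{\mathbb{P}}(1/\sqrt{k})\bigr)$; thus $W_{ik}=1+c\,S_{ik}$ with $c:=1/Z_{n-k:n}$ and $\ln(n/k)\,c=1+o_{\mathbb{P}}(1/\sqrt{k})$. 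The algebraic identity $\sum_{i=1}^{k}S_{ik}=\sum_{l=1}^{k}E_{l}$ yields $\frac{1}{k}\sum_{i=1}^{k}S_{ik}=1+N_{k}/\sqrt{k}$ with $N_{k}\stackrel{d}{\longrightarrow}\mathcal{N}(0,1)$, while $\frac{1}{k}\sum_{i=1}^{k}S_{ik}^{2}\stackrel{\mathbb{P}}{\longrightarrow}\int_{0}^{1}(\ln x)^{2}\,dx=2$. Using $0\le x-\ln(1+x)\le x^{2}/2$, the expansion $(x^{\beta}-1)/\beta\sim\ln x$ as $x\to1$, and the SOC \eqref{SOC} (applied with the diverging argument $Z_{n-k:n}$, noting $B(Z_{n-k:n})\sim B(\ln(n/k))$ since $|B|\in\mathcal{RV}_{\beta}$), I would obtain
\[
{\rm H}(k)=\theta c\bigl(1+N_{k}/\sqrt{k}\bigr)-\theta c^{2}+\frac{B(\ln(n/k))}{\ln(n/k)}\bigl(1+o_{\mathbb{P}}(1)\bigr),
\]
and multiplying by $\ln(n/k)$, with $\ln(n/k)\,c=1+o_{\mathbb{P}}(1/\sqrt{k})$ and $\ln(n/k)\,c^{2}\sim1/\ln(n/k)$, gives \eqref{WTC-MOP-GG_dist} for $p=0$ with $P_{k}:=N_{k}$. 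Combining with Step 1 and absorbing the $p$-dependent term into the $o_{\mathbb{P}}(1)$ correction completes the argument.

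\emph{Main obstacle.} The delicate point in Step 2 is uniformity in $i$: for the top order statistics (small $i$) the ratio $W_{ik}$ need not tend to $1$ when $k$ is a power of $n$, so the Taylor bounds and the SOC must be replaced by uniform (Potter/Drees-type) inequalities, and the contribution of those $o(k)$ indices to the averages $\frac{1}{k}\sum_{i}(\cdot)$ must be estimated separately and shown to be $o_{\mathbb{P}}(1/\sqrt{k})$.
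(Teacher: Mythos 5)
Your argument is essentially the paper's own route (spelled out here for the PM-based estimator in the proof of Theorem \ref{Theo-wtcpmGG} and deferred to Caeiro {\it et al.} for this statement): represent the order statistics through standard Pareto/exponential variables, apply the first- and second-order conditions, Taylor-expand $\ln\bigl(1+E_{k-i+1:k}/E_{n-k:n}\bigr)$, and use $E_{n-k:n}\sim\ln(n/k)$; your R\'enyi telescoping sums $S_{ik}$ are interchangeable with the paper's representation $Y_{n-i+1:n}/Y_{n-k:n}\stackrel{d}{=}Y_{k-i+1:k}$, and your computation for $p=0$ reproduces \eqref{WTC-MOP-GG_dist} exactly. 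Your Step 1 reduction of general $p$ to $p=0$ via the leading-order cancellation $\tfrac12 M^{(2)}_{n,k}-{\rm H}^2(k)=o_{_{\mathbb{P}}}\bigl(\theta^2/(\ln(n/k))^2\bigr)$ is a legitimate shortcut consistent with the paper's own remark that the representation is $p$-free at this order, and the uniformity issue you flag for the top order statistics is a genuine technical point, but one that the paper's termwise expansions gloss over in exactly the same way.
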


As noted in {Caeiro {\it et al.} }\cite{caeiroetal-wtcmop} the distributional representation of $\widehat\theta^{\rm GG}_p(k)$ in Theorem \ref{Theo-wtcmopGG} does not depend on the tuning parameter $p$ but a dependence on $p$ can appear if higher order terms are considered in the tail quantile function expansion. For details on the proof of Theorem \ref{Theo-wtcmopGG} see {Caeiro {\it et al.}}\cite{caeiroetal-wtcmop}.

The asymptotic distribution representation of the estimator $\widehat \theta^{\rm G}(k)$, in \eqref{WTC-G} can be {obtained} straightforwardly applying a result presented in {Girard}\cite{girard2004}. So, for intermediate $k$-values, {Girard}\cite{girard2004} proved that 
	$$\frac{1}{k}\sum_{i=1}^{k} \ln \ln (n/i) - \ln \ln (n/k) \sim \frac{1}{\ln (n/k)} \left( 1-\frac{1}{\ln (n/k)}\right).$$
	Then, 
	\begin{equation}\label{G-GG}
		\hat\theta^{\rm G}(k) \stackrel{d}{=} \hat\theta^{\rm GG}(k)\left(1+\frac{1}{\ln(n/k)}\right)(1+{o_{_{\mathbb{P}}}}(1)).
	\end{equation}

\begin{theorem}\label{Theo-wtcmopG}
    Under a second-order framework, i.e., assuming that condition 
    \eqref{SOC} holds, with $k$ an intermediate sequence, as in \eqref{intermediate}, the WTC-estimator $\widehat\theta^{\rm G}_p(k)$, in \eqref{WTC-Mop-G}, has an asymptotic distributional representation of the type,
	\begin{equation}\label{WTC-MOP-G_dist}
		\widehat\theta^{\rm G}_p(k)\stackrel{d}{=} \theta \left\{1+\frac{P_k}{\sqrt k}+\frac{B\left(\ln (n/k)\right)}{\theta} (1+{o_{_{\mathbb{P}}}}(1))\right\},
  \quad{p \in {\mathbb R},}
	\end{equation}
 with $P_k$ a standard normal {RV}. 
\end{theorem}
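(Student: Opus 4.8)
The plan is to deduce Theorem~\ref{Theo-wtcmopG} from Theorem~\ref{Theo-wtcmopGG}, exploiting the fact that $\widehat\theta^{\rm G}_p(k)$ and $\widehat\theta^{\rm GG}_p(k)$ differ only by a \emph{deterministic} factor. First I would write
\begin{equation*}
\widehat\theta^{\rm G}_p(k)=\frac{{\rm H}_p(k)}{T_n}=\ln(n/k)\,{\rm H}_p(k)\cdot\frac{1}{\ln(n/k)\,T_n}=\widehat\theta^{\rm GG}_p(k)\cdot\frac{1}{\ln(n/k)\,T_n},
\end{equation*}
where $T_n:=\frac{1}{k}\sum_{i=1}^k\ln\ln((n+1)/i)-\ln\ln((n+1)/(k+1))$ depends only on $n$ and $k$ and not on the order statistics. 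Consequently the distributional representation of $\widehat\theta^{\rm G}_p(k)$ is obtained by multiplying the representation \eqref{WTC-MOP-GG_dist} of $\widehat\theta^{\rm GG}_p(k)$ by the correction factor $1/(\ln(n/k)\,T_n)$, so the whole task reduces to an accurate asymptotic expansion of that factor.

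For the correction factor I would invoke Girard's estimate quoted above, $\frac{1}{k}\sum_{i=1}^k\ln\ln(n/i)-\ln\ln(n/k)\sim\frac{1}{\ln(n/k)}(1-\frac{1}{\ln(n/k)})$ for intermediate $k$ (the replacement of $n,k$ by $n+1,k+1$ inside the logarithms is asymptotically immaterial for such $k$), which yields
\begin{equation*}
\frac{1}{\ln(n/k)\,T_n}=\left(1+\frac{1}{\ln(n/k)}\right)(1+o(1)),
\end{equation*}
precisely the factor that produced the relation \eqref{G-GG} between $\widehat\theta^{\rm G}(k)$ and $\widehat\theta^{\rm GG}(k)$ (the argument is the same). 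Substituting \eqref{WTC-MOP-GG_dist} and multiplying out, the contribution $\theta/\ln(n/k)$ coming from the correction factor cancels exactly the bias term $-\theta/\ln(n/k)$ inherited from Theorem~\ref{Theo-wtcmopGG}; the remaining cross-products, namely $P_k/(\sqrt k\,\ln(n/k))$, $B(\ln(n/k))/(\theta\,\ln(n/k))$ and $1/\ln^2(n/k)$, are of smaller order than the surviving terms $P_k/\sqrt k$ and $B(\ln(n/k))/\theta$ and get absorbed into the relevant ${o_{_{\mathbb{P}}}}(1)$ factors, leaving exactly \eqref{WTC-MOP-G_dist}. As for $\widehat\theta^{\rm GG}_p(k)$, no dependence on $p$ survives at this order; it would only surface through higher-order terms of the underlying quantile-function expansion.

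The step requiring care — and the one I expect to be the main obstacle — is the final bookkeeping of the negligible terms. Once the $O(1/\ln(n/k))$ bias of $\widehat\theta^{\rm GG}_p(k)$ has been cancelled, the expansion leaves a residual deterministic contribution of order $1/\ln^2(n/k)$, and one must argue that this is absorbed into the displayed bias term $B(\ln(n/k))/\theta$. Since $1/\ln^2(n/k)=o(1/\ln(n/k))$ and, for the model classes of interest here ($B(t)\propto t^{-1}$ or $B(t)\propto t^{-1}\ln t$), $B(\ln(n/k))$ is of exact order $1/\ln(n/k)$ or $\ln\ln(n/k)/\ln(n/k)$, this residual is ${o_{_{\mathbb{P}}}}(1)$ relative to $B(\ln(n/k))/\theta$; in the degenerate case $B\equiv 0$ (the Weibull) it becomes the genuine leading bias and has to be handled in the separate treatment of that model class. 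One also has to check that the $o(1)$ remainder of Girard's expansion, once multiplied by the stochastic term, stays of order ${o_{_{\mathbb{P}}}}(1/\sqrt k)$, so that the $P_k/\sqrt k$ contribution is undisturbed. Beyond these order-of-magnitude verifications, the argument is the routine multiplication of two asymptotic expansions, from which the stated representation follows.
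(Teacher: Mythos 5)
Your proposal is correct and follows essentially the same route as the paper: it reduces the claim to Theorem~\ref{Theo-wtcmopGG} via the deterministic correction factor $1/(\ln(n/k)\,T_n)$, uses Girard's expansion of $T_n$ to identify that factor as $(1+1/\ln(n/k))(1+o(1))$ (the relation \eqref{G-GG}), and observes that the $\pm\theta/\ln(n/k)$ terms cancel. Your additional bookkeeping of the residual $1/\ln^2(n/k)$ term and of the degenerate case $B\equiv 0$ is more careful than the paper's own two-line argument, but the underlying idea is identical.
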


\begin{proof}
   The result in \eqref{G-GG} enable us to write 
   $$\hat\theta^{\rm G}_p(k) \stackrel{d}{=} \hat\theta^{\rm GG}_p(k)\left(1+\frac{1}{\ln(n/k)}\right)(1+{o_{_{\mathbb{P}}}}(1)).$$
   Given the asymptotic distributional representation of $\hat\theta^{\rm GG}_p(k)$, in \eqref{WTC-MOP-GG_dist}, the {result} follows. 
\end{proof}

\begin{corollary}\label{cor_wtcmop}
    	Under the conditions of Theorems \ref{Theo-wtcmopGG} and \ref{Theo-wtcmopG},  further assuming that  $\sqrt k B(\ln(n/k)) \to \lambda \in \mathbb{R}$, that additionally  $\sqrt k/ \ln(n/k) \to 0$ for the WTC-estimator $\widehat\theta^{\rm GG}_p(k)$,
	 and with $\bullet=\{GG,G\}$ denoting the WTC-estimators in \eqref{WTC-Mop-GG} and \eqref{WTC-Mop-G},
		\begin{equation*}
			\sqrt k \left(\widehat\theta^{\bullet}_p(k) -\theta\right) \xrightarrow[]{d} \mathcal{N}\left(\lambda, \theta^2\right){= \lambda+\theta \mathcal{N}(0,1)},\quad{p \in {\mathbb R}}.
		\end{equation*}
\end{corollary}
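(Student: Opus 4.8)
The plan is to read off the limiting distribution directly from the distributional representations already established in Theorems~\ref{Theo-wtcmopGG} and~\ref{Theo-wtcmopG}, and then to invoke Slutsky's lemma. For $\bullet=GG$ I would rescale \eqref{WTC-MOP-GG_dist}: multiplying by $\sqrt k$ and subtracting $\theta$ gives, in distribution,
$$\sqrt k\left(\widehat\theta^{\rm GG}_p(k)-\theta\right)\stackrel{d}{=}\theta\,P_k+\sqrt k\left(B(\ln(n/k))-\frac{\theta}{\ln(n/k)}\right)\bigl(1+o_{_{\mathbb{P}}}(1)\bigr).$$
Under the stated hypotheses $\sqrt k\,B(\ln(n/k))\to\lambda$ and $\sqrt k/\ln(n/k)\to 0$, so the deterministic factor $\sqrt k\bigl(B(\ln(n/k))-\theta/\ln(n/k)\bigr)$ converges to $\lambda$. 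Hence the right-hand side should reduce to $\theta\,P_k+\lambda+o_{_{\mathbb{P}}}(1)$, and since $P_k\sim\mathcal N(0,1)$, Slutsky's lemma yields $\sqrt k(\widehat\theta^{\rm GG}_p(k)-\theta)\xrightarrow{d}\lambda+\theta\,\mathcal N(0,1)=\mathcal N(\lambda,\theta^2)$.

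For $\bullet=G$ I would argue identically, now starting from \eqref{WTC-MOP-G_dist}, which gives
$$\sqrt k\left(\widehat\theta^{\rm G}_p(k)-\theta\right)\stackrel{d}{=}\theta\,P_k+\sqrt k\,B(\ln(n/k))\bigl(1+o_{_{\mathbb{P}}}(1)\bigr).$$
Here only the term $\sqrt k\,B(\ln(n/k))\to\lambda$ survives; there is no penultimate $1/\ln(n/k)$ contribution, which is precisely why the Corollary imposes the extra condition $\sqrt k/\ln(n/k)\to 0$ only for the $GG$-estimator. The same Slutsky step then produces the announced $\mathcal N(\lambda,\theta^2)$ limit. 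In both cases the conclusion holds uniformly in the tuning parameter $p$, since $p$ does not appear in either limiting representation.

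The only delicate point — the step I would treat most carefully — is disposing of the $o_{_{\mathbb{P}}}(1)$ factor multiplying the bias term. One has to observe that a convergent deterministic sequence is bounded, so $\sqrt k\,B(\ln(n/k))\cdot o_{_{\mathbb{P}}}(1)=o_{_{\mathbb{P}}}(1)$ and, in the $GG$ case, $\bigl(\sqrt k/\ln(n/k)\bigr)\cdot o_{_{\mathbb{P}}}(1)=o_{_{\mathbb{P}}}(1)$ as well; an unbounded deterministic factor times a quantity that is $o_{_{\mathbb{P}}}(1)$ need not vanish, so boundedness is essential here. Beyond this small bookkeeping, no genuine obstacle is expected: the Corollary is, in effect, an immediate consequence of Theorems~\ref{Theo-wtcmopGG} and~\ref{Theo-wtcmopG} together with the standard convergence-in-distribution calculus.
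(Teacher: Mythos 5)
Your proposal is correct and follows essentially the same route as the paper: rescale the distributional representations of Theorems~\ref{Theo-wtcmopGG} and~\ref{Theo-wtcmopG}, note that $\sqrt k\,B(\ln(n/k))\to\lambda$ (and, for the $GG$ case, that $\sqrt k/\ln(n/k)\to 0$ kills the penultimate term), and conclude by Slutsky. The only cosmetic difference is that the paper delegates the $GG$ case to the cited reference while you carry it out explicitly, and your remark on why the $o_{_{\mathbb{P}}}(1)$ factor is harmless (boundedness of the convergent deterministic sequences) is a correct, if standard, bit of care.
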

\begin{proof}
  The proof for the WTC-estimator $\widehat\theta^{\rm GG}_p(k)$ in \eqref{WTC-Mop-GG} can be seen in {Caeiro {\it et al.}}\cite{caeiroetal-wtcmop}.  From the distributional representation in \eqref{WTC-MOP-G_dist}, 
\begin{equation*}
\sqrt k \left(\widehat\theta^{\rm G}_p(k) -\theta \right)\stackrel{d}{=} \theta P_k+\sqrt k B\left(\ln (n/k)
\right)(1+{o_{_{\mathbb{P}}}}(1)){.}
\end{equation*}
Assuming that $\sqrt k B(\ln(n/k)) \to \lambda \in \mathbb{R}$, with $P_k$ the standard normal RV in \eqref{WTC-MOP-GG_dist}, denoted by $\mathcal{N}(0,1)$, the result follows.
\end{proof}

{Next, we present without proofs the asymptotic behaviour of 
$\widehat\theta^{GG}_p(k)$ and $\widehat\theta^{G}_p(k)$, in \eqref{WTC-Mop-GG} and \eqref{WTC-Mop-G}, respectively, for models with a function $B(t)$ given by $B(t)=\alpha\, t^{-1}$, with $\alpha \in \mathbb{R}${, and for models where $B(t)=0$}. }

\begin{corollary}\label{cor-wtcmopGG-new}
Under the conditions of Theorem \ref{Theo-wtcmopGG} and further assuming that
 $B(t)=\alpha\, t^{-1}$, with $\alpha \in \mathbb{R}$, the WTC-estimator $\widehat\theta^{\rm GG}_p(k)$, {$p \in {\mathbb R}$}, has an asymptotic distributional representation of the type,
	\begin{equation*}\label{WTC-MOP-GG_dist1}
		\widehat\theta^{\rm GG}_p(k)\stackrel{d}{=} \theta \left\{1+\frac{P_k}{\sqrt k}+\frac{\alpha -\theta}{\theta\ln(n/k)}(1+{o_{_{\mathbb{P}}}}(1))\right\},
	\end{equation*}
 with $P_k$ a standard normal RV. If the intermediate sequence $k$ is such that 
 $\sqrt k /\ln(n/k) \to \lambda \in \mathbb{R}$, 
		then,
		\begin{equation*}
			\sqrt k \left(\widehat\theta^{\rm GG}_p(k) -\theta\right) \xrightarrow[]{d} \mathcal{N}\left({\lambda \frac{\alpha-\theta}{\theta}}, \theta^2\right).
		\end{equation*}
 {If we assume that $B(t)=0$, the WTC-estimator $\widehat\theta^{\rm GG}_p(k)$ has an asymptotic distributional representation of the type,
	\begin{equation*}\label{WTC-MOP-GG_dist0}
		\widehat\theta^{\rm GG}_p(k)\stackrel{d}{=} \theta \left\{1+\frac{P_k}{\sqrt k}-\frac{1}{\ln(n/k)}(1+{o_{_{\mathbb{P}}}}(1))\right\},
	\end{equation*}
 and if $\sqrt k/\ln(n/k) \to \eta \in \mathbb{R}$ then, 
 \begin{equation*}
			\sqrt k \left(\widehat\theta^{\rm GG}_p(k) -\theta\right) \xrightarrow[]{d} \mathcal{N}\left(-\eta\; \theta, \theta^2\right).
		\end{equation*}}
\end{corollary}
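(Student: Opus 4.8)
The plan is to read off both statements directly from Theorem~\ref{Theo-wtcmopGG} by inserting the explicit form of the bias function $B$ into the distributional representation \eqref{WTC-MOP-GG_dist}, and then to pass to the central limit theorems by a Slutsky-type argument, exactly as in the proof of Corollary~\ref{cor_wtcmop}. For the first part one assumes $B(t)=\alpha\,t^{-1}$, $\alpha\in\mathbb{R}$; such a $B$ satisfies the SOC~\eqref{SOC} with $\beta=-1$, so Theorem~\ref{Theo-wtcmopGG} is in force. Evaluating at $t=\ln(n/k)$ gives $B(\ln(n/k))=\alpha/\ln(n/k)$, so the deterministic term in \eqref{WTC-MOP-GG_dist} simplifies to
\[
\frac{B(\ln(n/k))}{\theta}-\frac{1}{\ln(n/k)}=\frac{\alpha}{\theta\,\ln(n/k)}-\frac{1}{\ln(n/k)}=\frac{\alpha-\theta}{\theta\,\ln(n/k)},
\]
which is precisely the asserted representation. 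The $B(t)=0$ case (which covers the Weibull, for which $L$ in \eqref{FOC} is constant and \eqref{SOC} holds trivially with $B\equiv 0$) is even more immediate: the bracketed deterministic term in \eqref{WTC-MOP-GG_dist} collapses to $-1/\ln(n/k)$.

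For the limiting normal laws I would multiply the two representations by $\sqrt k$ and subtract $\theta$, obtaining in the first case
\[
\sqrt k\bigl(\widehat\theta^{\rm GG}_p(k)-\theta\bigr)\stackrel{d}{=}\theta\,P_k+\frac{\sqrt k}{\ln(n/k)}\cdot\frac{\alpha-\theta}{\theta}\,(1+o_{_{\mathbb{P}}}(1)),
\]
and in the second case the analogous expression with $\frac{\alpha-\theta}{\theta}$ replaced by $-\theta$. Under the stated threshold conditions $\sqrt k/\ln(n/k)\to\lambda$ (resp.\ $\to\eta$) the deterministic summand converges in probability, after absorbing the $1+o_{_{\mathbb{P}}}(1)$ factor, to $\lambda(\alpha-\theta)/\theta$ (resp.\ $-\eta\,\theta$); since $P_k\sim\mathcal N(0,1)$, Slutsky's theorem delivers the limits $\mathcal N\bigl(\lambda(\alpha-\theta)/\theta,\theta^2\bigr)$ and $\mathcal N\bigl(-\eta\,\theta,\theta^2\bigr)$.

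The main point to watch — rather than a genuine obstacle — is the internal consistency of the hypotheses. Here $\sqrt k\,B(\ln(n/k))=\alpha\sqrt k/\ln(n/k)$, so assuming $\sqrt k/\ln(n/k)\to\lambda$ already forces $\sqrt k\,B(\ln(n/k))\to\alpha\lambda$ in the sense of Corollary~\ref{cor_wtcmop}; moreover the extra requirement $\sqrt k/\ln(n/k)\to 0$ imposed there is no longer needed, because for these particular models the two $O(1/\ln(n/k))$ contributions merge into a single term of the same order instead of one being negligible relative to the other. The only degenerate sub-case is $\alpha=\theta$ (and, analogously, $B\equiv 0$ if one does not wish to carry a bias term), where the deterministic contribution vanishes identically; there one simply retains the $(1+o_{_{\mathbb{P}}}(1))$ factor until after the limit has been taken, which does not affect the stated conclusions.
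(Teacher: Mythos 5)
Your approach is exactly the intended one: the paper states this corollary without proof, as a direct specialization of Theorem \ref{Theo-wtcmopGG} obtained by substituting $B(\ln(n/k))=\alpha/\ln(n/k)$ (resp.\ $B\equiv 0$) into \eqref{WTC-MOP-GG_dist} and then applying Slutsky under the stated condition on $\sqrt k/\ln(n/k)$. The simplification of the deterministic term to $(\alpha-\theta)/(\theta\ln(n/k))$, and your remark that the separate requirement $\sqrt k/\ln(n/k)\to 0$ of Corollary \ref{cor_wtcmop} is no longer needed because the two $O(1/\ln(n/k))$ contributions now merge into one term of the same order, are both correct.

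There is, however, an algebra slip in your first displayed formula for $\sqrt k\bigl(\widehat\theta^{\rm GG}_p(k)-\theta\bigr)$. In \eqref{WTC-MOP-GG_dist} the entire brace is multiplied by $\theta$, so the bias contribution to $\widehat\theta^{\rm GG}_p(k)-\theta$ is $\theta\cdot\frac{\alpha-\theta}{\theta\ln(n/k)}=\frac{\alpha-\theta}{\ln(n/k)}$, not $\frac{\alpha-\theta}{\theta\ln(n/k)}$; under $\sqrt k/\ln(n/k)\to\lambda$ this yields the limit $\mathcal N\bigl(\lambda(\alpha-\theta),\theta^2\bigr)$. You do carry the outer factor $\theta$ correctly in the $B\equiv 0$ case, where the coefficient becomes $-\theta$ and the mean $-\eta\,\theta$, so your two cases are treated inconsistently. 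The centering $\lambda(\alpha-\theta)$ is also the one consistent with the squared-bias term $\bigl((\alpha-\theta)/\ln(n/k)\bigr)^2$ appearing in \eqref{AMSE_wtcmop1} for the same model; the mean $\lambda(\alpha-\theta)/\theta$ printed in the corollary, which your computation happens to reproduce, appears to carry a spurious $1/\theta$.
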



\begin{corollary}\label{cor-wtcmopG-new}
Under the conditions of Theorem \ref{Theo-wtcmopG} and further  {assuming that}
$B(t)=\alpha\, t^{-1}$, with $\alpha \in \mathbb{R}$, the WTC-estimator $\widehat\theta^{\rm G}_p(k)$ with {$p \in {\mathbb R}$}, in \eqref{WTC-Mop-G}, has an asymptotic distributional representation of the type,
	\begin{equation*}\label{WTC-MOP-G_dist1}
		\widehat\theta^{\rm G}_p(k)\stackrel{d}{=} \theta \left\{1+\frac{P_k}{\sqrt k}+\frac{\alpha}{\theta\ln(n/k)}(1+{o_{_{\mathbb{P}}}}(1))\right\},
	\end{equation*}
 with $P_k$ a standard normal {RV.}
 If the intermediate sequence $k$ is such that {$\sqrt k B(\ln(n/k)) \to \lambda \in \mathbb{R}$}, 
		then,
		\begin{equation*}
			\sqrt k \left(\widehat\theta^{\rm G}_p(k) -\theta\right) \xrightarrow[]{d} \mathcal{N}\left(\lambda, \theta^2\right).
		\end{equation*}
   {If we assume that $B(t)=0$, the WTC-estimator $\widehat\theta^{\rm G}_p(k)$ has an asymptotic distributional representation of the type,
	\begin{equation*}\label{WTC-MOP-G_dist0}
		\widehat\theta^{\rm G}_p(k)\stackrel{d}{=} \theta \left\{1+\frac{P_k}{\sqrt k}+o_{_{\mathbb{P}}}(1)\right\},
	\end{equation*}
 and
 \begin{equation*}
			\sqrt k \left(\widehat\theta^{\rm GG}_p(k) -\theta\right) \xrightarrow[]{d} \mathcal{N}\left(0, \theta^2\right).
		\end{equation*}}
\end{corollary}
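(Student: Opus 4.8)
The plan is to derive both assertions by specializing the general distributional representation \eqref{WTC-MOP-G_dist} of Theorem~\ref{Theo-wtcmopG} to the two prescribed forms of the function $B$, and then to read off the limit laws by a Slutsky-type argument, exactly along the lines of the proof of Corollary~\ref{cor_wtcmop}.

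\textbf{The case $B(t)=\alpha\,t^{-1}$.} Here $B(\ln(n/k))=\alpha/\ln(n/k)$, so I would simply substitute this into \eqref{WTC-MOP-G_dist}, obtaining
$$\widehat\theta^{\rm G}_p(k)\stackrel{d}{=}\theta\left\{1+\frac{P_k}{\sqrt k}+\frac{\alpha}{\theta\ln(n/k)}\,(1+o_{_{\mathbb{P}}}(1))\right\},$$
which is the stated representation. For the limiting law I would write $\sqrt k(\widehat\theta^{\rm G}_p(k)-\theta)\stackrel{d}{=}\theta P_k+\sqrt k\,B(\ln(n/k))(1+o_{_{\mathbb{P}}}(1))$; since the prescribed condition $\sqrt k\,B(\ln(n/k))=\alpha\sqrt k/\ln(n/k)\to\lambda\in\mathbb{R}$ makes the second summand tend to $\lambda$ (a bounded deterministic sequence times $1+o_{_{\mathbb{P}}}(1)$), while $\theta P_k\sim\mathcal{N}(0,\theta^2)$, Slutsky's lemma yields $\sqrt k(\widehat\theta^{\rm G}_p(k)-\theta)\xrightarrow[]{d}\mathcal{N}(\lambda,\theta^2)$. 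This is the behaviour already recorded in Corollary~\ref{cor_wtcmop} restricted to $|B|\in\mathcal{RV}_{-1}$; the only new information is that the dominant bias is $\alpha/(\theta\ln(n/k))$, and that it remains free of the tuning parameter $p$.

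\textbf{The case $B(t)=0$.} Now the bias term in \eqref{WTC-MOP-G_dist} vanishes identically, so the representation reads $\widehat\theta^{\rm G}_p(k)\stackrel{d}{=}\theta\{1+P_k/\sqrt k+o_{_{\mathbb{P}}}(1)\}$. What needs care is the order of that $o_{_{\mathbb{P}}}(1)$, so here I would not invoke \eqref{WTC-MOP-G_dist} as a black box but instead start from representation \eqref{WTC-MOP-GG_dist} of Theorem~\ref{Theo-wtcmopGG} with $B\equiv0$ (which still displays the bias $-1/\ln(n/k)$) and combine it with the transfer identity \eqref{G-GG}, $\widehat\theta^{\rm G}_p(k)\stackrel{d}{=}\widehat\theta^{\rm GG}_p(k)(1+1/\ln(n/k))(1+o_{_{\mathbb{P}}}(1))$. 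Expanding the product, the $-1/\ln(n/k)$ bias of $\widehat\theta^{\rm GG}_p(k)$ is cancelled by the factor $1+1/\ln(n/k)$, and the surviving contributions are a deterministic remainder of order $1/\ln^2(n/k)$ and stochastic terms of order at most $P_k/(\sqrt k\,\ln(n/k))$. Multiplying by $\sqrt k$ and using the intermediate-rank conditions \eqref{intermediate} together with the rate restriction $\sqrt k/\ln(n/k)\to\eta\in\mathbb{R}$ (which, since $k=o(n)$ forces $\ln(n/k)\to\infty$, gives $\sqrt k/\ln^2(n/k)\to0$), each of these terms is $o_{_{\mathbb{P}}}(1)$; hence $\sqrt k(\widehat\theta^{\rm G}_p(k)-\theta)\stackrel{d}{=}\theta P_k+o_{_{\mathbb{P}}}(1)$, and Slutsky's lemma gives $\sqrt k(\widehat\theta^{\rm G}_p(k)-\theta)\xrightarrow[]{d}\mathcal{N}(0,\theta^2)$.

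\textbf{Expected main obstacle.} Every step except the last is a one-line substitution followed by Slutsky's lemma. The only place that genuinely needs attention is the $B\equiv0$ case: one must verify that the cancellation of the $1/\ln(n/k)$-bias between the GG form and the correcting factor $1+1/\ln(n/k)$ leaves behind only terms that remain negligible after multiplication by $\sqrt k$, and it is precisely there that the rate condition relating $\sqrt k$ to $\ln(n/k)$ is consumed. As in Theorem~\ref{Theo-wtcmopGG}, the leading terms throughout do not depend on $p$.
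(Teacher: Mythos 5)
Your proof is correct and is exactly the argument the paper intends (the corollary is stated explicitly ``without proofs''): substitute the prescribed form of $B$ into the representation of Theorem~\ref{Theo-wtcmopG} and conclude by Slutsky's lemma, precisely as in the paper's proof of Corollary~\ref{cor_wtcmop}. Your more careful handling of the $B\equiv 0$ case via \eqref{G-GG} is in fact an improvement on the text: it makes visible that the residual terms of order $\sqrt k/\ln^{2}(n/k)$ and $o_{_{\mathbb{P}}}\bigl(\sqrt k/\ln(n/k)\bigr)$ surviving the bias cancellation only vanish under a rate condition such as $\sqrt k/\ln(n/k)\to\eta\in\mathbb{R}$ --- a hypothesis that appears in the companion Corollary~\ref{cor-wtcmopGG-new} but is silently omitted from the $B\equiv 0$ part of this statement.
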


\vspace{0.2cm}
The next Theorem presents the asymptotic behaviour of the WTC-estimator $\widetilde \theta^{\rm GG}_p(k)$, in \eqref{WTC-PM-GG}. 

\begin{theorem}\label{Theo-wtcpmGG}
	Under a second-order framework, i.e., assuming that condition  \eqref{SOC} holds, with $k$ an intermediate sequence, as in \eqref{intermediate}, the WTC-estimator $\widetilde\theta^{\rm GG}_p(k)$ has an asymptotic distributional representation of the type,
\begin{equation}\label{WTC-MP-GG_dist}
		\widetilde\theta^{\rm GG}_p(k)\stackrel{d}{=} \theta \left\{1+{\left(\sqrt{\frac{\Gamma(2p+1)}{\Gamma^2(p+1)}-1}\right)\frac{\overline{Z}_n^{(p)}}{p\sqrt k}}+\left(\frac{B\left(\ln (n/k)\right)}{\theta}-\frac{p+1}{2\ln(n/k)} 
		\right)(1+{o_{_{\mathbb{P}}}}(1))\right\},\quad
  {p>-1,\quad p\neq 0,}
	\end{equation}
{with $\overline{Z}_n^{(p)}$ an asymptotically standard normal RV.}
	\end{theorem}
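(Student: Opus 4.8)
The plan is to obtain an asymptotic expansion of $M_{n,k}^{(p)}=\tfrac1k\sum_{i=1}^{k}V_{ik}^{p}$ and then push it through the smooth maps $x\mapsto(x/\Gamma(p+1))^{1/p}$ and multiplication by $\ln(n/k)$. First, I would use the classical representation $\{X_{n-i+1:n}\}_{i=1}^{k}\stackrel{d}{=}\{U(\exp E_{n-i+1:n})\}_{i=1}^{k}$ with $E_{1},E_{2},\dots$ i.i.d.\ standard exponential, together with the Rényi property that $E_{n-k:n}$ is independent of $\{E_{n-i+1:n}-E_{n-k:n}\}_{i=1}^{k}\stackrel{d}{=}\{E_{k-i+1:k}\}_{i=1}^{k}$, and with $E_{n-k:n}=\ln(n/k)+O_{_{\mathbb{P}}}(1/\sqrt k)$ (indeed $\mathbb{E}[E_{n-k:n}]=\sum_{j=k+1}^{n}j^{-1}=\ln(n/k)+O(1/k)$ and ${\rm Var}(E_{n-k:n})=\sum_{j=k+1}^{n}j^{-2}=O(1/k)$). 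From the FOC \eqref{FOC}, $U(\exp t)=t^{\theta}L(t)$, so that
\[
V_{ik}\stackrel{d}{=}\theta\ln\!\Bigl(1+\tfrac{E_{k-i+1:k}}{E_{n-k:n}}\Bigr)+\ln\frac{L\bigl(E_{n-k:n}+E_{k-i+1:k}\bigr)}{L(E_{n-k:n})}.
\]

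Second, with $\ln(1+u)=u-\tfrac12u^{2}+\cdots$ and the SOC \eqref{SOC} written as $\ln L(t(1+u))-\ln L(t)=B(t)\tfrac{(1+u)^{\beta}-1}{\beta}(1+o(1))=B(t)\,u\,(1+o(1))$ as $u\to0$, one gets, for the bulk of the indices,
\[
V_{ik}=\frac{\theta\,E_{k-i+1:k}}{E_{n-k:n}}\Bigl(1+\frac{B(E_{n-k:n})}{\theta}-\frac{E_{k-i+1:k}}{2E_{n-k:n}}+\cdots\Bigr).
\]
Raising to the power $p$, Taylor-expanding the bracket, averaging over $i$ (note $\tfrac1k\sum_{i=1}^{k}E_{k-i+1:k}^{q}=\tfrac1k\sum_{j=1}^{k}E_{j}^{q}$ is a genuine i.i.d.\ average), and using the law of large numbers, $\tfrac1k\sum_{j}E_{j}^{p+1}\xrightarrow{\mathbb{P}}\Gamma(p+2)=(p+1)\Gamma(p+1)$, together with the central limit theorem, $\sqrt k\bigl(\tfrac1k\sum_{j}E_{j}^{p}-\Gamma(p+1)\bigr)\xrightarrow{d}\mathcal{N}\bigl(0,\Gamma(2p+1)-\Gamma^{2}(p+1)\bigr)$, one arrives at
\[
\frac{M_{n,k}^{(p)}}{\Gamma(p+1)}=\frac{\theta^{p}}{E_{n-k:n}^{\,p}}\left\{1+\sqrt{\tfrac{\Gamma(2p+1)}{\Gamma^{2}(p+1)}-1}\;\frac{\overline{Z}_{n}^{(p)}}{\sqrt k}\bigl(1+o_{_{\mathbb{P}}}(1)\bigr)+\Bigl(\frac{p\,B(E_{n-k:n})}{\theta}-\frac{p(p+1)}{2E_{n-k:n}}\Bigr)\bigl(1+o_{_{\mathbb{P}}}(1)\bigr)\right\},
\]
with $\overline{Z}_{n}^{(p)}$ asymptotically $\mathcal{N}(0,1)$. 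Applying $x\mapsto x^{1/p}$ and expanding $(1+\varepsilon)^{1/p}=1+\varepsilon/p+o(\varepsilon)$ divides the stochastic and bias terms by $p$, giving the same expansion for ${\rm PM}_p(k)=(M_{n,k}^{(p)}/\Gamma(p+1))^{1/p}$ with leading factor $\theta/E_{n-k:n}$. Multiplying by $\ln(n/k)$ and replacing $E_{n-k:n}$ by $\ln(n/k)$ throughout then finishes: $\ln(n/k)/E_{n-k:n}=1+O_{_{\mathbb{P}}}\bigl(1/(\sqrt k\,\ln(n/k))\bigr)=1+o_{_{\mathbb{P}}}(1/\sqrt k)$, while $B\in\mathcal{RV}_{\beta}$ gives $B(E_{n-k:n})=B(\ln(n/k))(1+o_{_{\mathbb{P}}}(1))$, so all substitution errors are negligible against the leading terms; the identity $\sqrt{\Gamma(2p+1)-\Gamma^{2}(p+1)}/(p\Gamma(p+1))=\tfrac1p\sqrt{\Gamma(2p+1)/\Gamma^{2}(p+1)-1}$ then yields exactly \eqref{WTC-MP-GG_dist}.

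I expect the main obstacle to be the lack of uniformity in $i$ of the expansion of $V_{ik}$: for the few largest observations ($i$ near $1$) the ratio $E_{k-i+1:k}/E_{n-k:n}$ need not be small (only $E_{k:k}\sim\ln k$ while $E_{n-k:n}\sim\ln(n/k)$), so those indices must be truncated and their contribution to $\tfrac1k\sum_{i}V_{ik}^{p}$ bounded by $o_{_{\mathbb{P}}}$ of the leading terms via Potter-type bounds for $L$ and $B$; together with the bookkeeping needed to check that each remainder is negligible against the appropriate one of the three distinct scales $1/\sqrt k$, $B(\ln(n/k))$ and $1/\ln(n/k)$, this is where the real work lies. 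A secondary point is that the central limit step requires $\mathbb{E}[E_{1}^{2p}]=\Gamma(2p+1)<\infty$, i.e.\ $p>-1/2$ (which is also precisely the range in which $\Gamma(2p+1)/\Gamma^{2}(p+1)-1\ge0$, by Jensen's inequality); outside this range a stable law replaces the normal limit. Everything else — the Taylor expansions, the LLN, and the delta method — is routine.
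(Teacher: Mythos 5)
Your proposal follows essentially the same route as the paper's proof: the Pareto/exponential order-statistic representation of the log-excesses, the FOC/SOC expansion of $V_{ik}$, the Taylor expansion of $V_{ik}^p$, the LLN/CLT for $\tfrac1k\sum_{j}E_j^{q}$ giving the $\overline{Z}_n^{(p)}$ term, and the delta method through $x\mapsto x^{1/p}$ followed by multiplication by $\ln(n/k)$ — the computations match term by term. Your side remarks are also well taken: the paper does not explicitly address the non-uniformity in $i$ of the expansion, and your observation that the CLT step requires $\Gamma(2p+1)<\infty$, i.e.\ $p>-1/2$, is sharper than the range $p>-1$ stated in the theorem (for $-1<p\le-1/2$ the asserted variance $\Gamma(2p+1)/\Gamma^2(p+1)-1$ is not even nonnegative).
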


\begin{proof} 
When $p=1$, ${\rm PM}_1(k)\equiv H(k)\equiv {\rm H}_0(k)$ and we get for $\widetilde\theta^{\rm GG}_1(k)$ the same distributional representation of the WTC-estimators, $\hat\theta^{\rm GG}(k)$ and {$\hat\theta^{\rm GG}_p(k)$}, for any real $p$. {Considering values of} {$p \in {\mathbb R}$,}
{let $Y_{1:n}, Y_{2:n},\ldots,Y_{n:n}$ be the OS’s associated  with $n$ independent standard Pareto {RV’s}  with {CDF} $F_Y(y)=1-1/y$,\ $y\geq 1$. Standard results from {OS’s} enable to write that  $X_{i:n}\stackrel{d}{=} U(Y_{i:n})$,\ $1\leq i\leq n$ and $Y_{n-i+1:n}/Y_{n-k:n}\stackrel{d}{=}Y_{k-i+1:k}$, where $U(\cdot)$ is the already defined RTQF {in \eqref{eq_quantile_function_U}}.  
From {Caeiro {\it et al.}}\cite{caeiroetal-wtcmop} we know that, if $k$ is intermediate, i.e., if \eqref{intermediate} holds, then the following distributional representation of the log-excesses in \eqref{Vik} holds,
\begin{eqnarray*}
	V_{ik} &\stackrel{d}{=}& \ln U(Y_{n-i+1:n})-\ln
	U(Y_{n-k:n})\\
	&\stackrel{d}{=}&\ln U\left(Y_{n-k:n} Y_{k-i+1:k}\right)-\ln U(Y_{n-k:n}).
\end{eqnarray*}}
{Next, using the relation in \eqref{FOC}
\begin{equation*}
V_{ik} \stackrel{d}{=}
\ln\frac{(\ln (Y_{n-k:n} Y_{k-i+1:k}))^{\theta}}{(\ln (Y_{n-k:n}))^{\theta}}+
\ln\frac{L(\ln (Y_{n-k:n} Y_{k-i+1:k}))}{L(\ln (Y_{n-k:n}))}
\end{equation*}
Given that $E_i=\ln Y_i$} { are independent, identically 
exponentially distributed with mean one, it follows that for $i=1,2,\ldots,k$, and under the validity of the  SOC, in \eqref{SOC},
\begin{eqnarray*}
	V_{ik}& \stackrel{d}{=}&
	\theta\
	\ln\left(1+\frac{E_{k-i+1:k}}{E_{n-k:n}}\right)+B\left(E_{n-k:n}
	\right)
	\frac{\left(1+\frac{E_{k-i+1:k}}{E_{n-k:n}}\right)^\beta-1}{\beta} (1+{o_{_{\mathbb{P}}}}(1)),
	\label{eq_vik_dist}%
\end{eqnarray*}}
{and since $ln(1+x)=x-\tfrac{x^2}{2}+\tfrac{x^3}{3}(1+o(1))$, when $x \to 0$,
 \begin{eqnarray*}
	V_{ik}	& \stackrel{d}{=}&	\theta\, \frac{E_{k-i+1:k}}{E_{n-k:n}}
\left(1-\frac{1}{2}\frac{E_{k-i+1:k}}{E_{n-k:n}}{+\frac{1}{3}\left(\frac{E_{k-i+1:k}}{E_{n-k:n}}\right)^2}(1+{o_{_{\mathbb{P}}}}(1))\right)+B\left(E_{n-k:n}
	\right)
\frac{E_{k-i+1:k}}{E_{n-k:n}} (1+{o_{_{\mathbb{P}}}}(1)).
\end{eqnarray*}
 we can get the asymptotic distributional representation of $V_{ik}^p$,
\begin{eqnarray*}
	V_{ik}^p &\stackrel{d}{=}&
\theta^p\, \left(\frac{E_{k-i+1:k}}{E_{n-k:n}}\right)^p
\left(1-\frac{1}{2}\frac{E_{k-i+1:k}}{E_{n-k:n}}{+\frac{1}{3}\left(\frac{E_{k-i+1:k}}{E_{n-k:n}}\right)^2}(1+{o_{_{\mathbb{P}}}}(1))+\frac{B\left(E_{n-k:n}\right)}{\theta}(1+{o_{_{\mathbb{P}}}}(1))
\right)^p.
\end{eqnarray*}
Using Taylor's expansion  of $(1+x)^p=1+px+\frac{p(p-1)}{2}x^2+o(x^2)$, for {$p>0$ and $|x|\leq 1$ or $p<0$ and $|x|<1$}, and the result $E_{n-k:n} \sim \ln(n/k)$ (see {Girard\cite{girard2004}}), we can write 
\begin{equation*}
V_{ik}^p \stackrel{d}{=}
\theta^p\, \left(\frac{E_{k-i+1:k}}{\ln (n/k)}\right)^p
	\left\{1-\frac{p}{2}\frac{E_{k-i+1:k}}{\ln (n/k)}(1+{o_{_{\mathbb{P}}}}(1))+{\frac{p(3p+5)}{24}}\left(\frac{E_{k-i+1:k}}{\ln (n/k)}\right)^2(1+{o_{_{\mathbb{P}}}}(1)) +\frac{pB\left(\ln (n/k)\right)}{\theta}(1+{o_{_{\mathbb{P}}}}(1))	\right\}.
\end{equation*}
Thus,
\begin{multline}
	\frac{1}{k}\sum_{i=1}^k V_{ik}^p \stackrel{d}{=}
 \frac{\theta^p}{(\ln(n/k))^p}\left\{\frac{1}{k}\sum_{i=1}^k E_i^p -\frac{p}{2\ln(n/k)}\frac{1}{k}\sum_{i=1}^k E_i^{p+1}
 +{\frac{p(3p+5)}{24\ln^2(n/k)}}\frac{1}{k}\sum_{i=1}^k E_i^{p+2}\right\}(1+{o_{_{\mathbb{P}}}}(1))\\
 +\frac{\theta^{p-1} p\, B\left(\ln (n/k)\right)}{(\ln(n/k))^p}
	\frac{1}{k}\sum_{i=1}^k E_i^{p}(1+{o_{_{\mathbb{P}}}}(1)), \label{Vik^p_dist}
\end{multline}}
{and we can write} 
\begin{eqnarray*}
		\frac{1}{k}\sum_{i=1}^k V_{ik}^p &\stackrel{d}{=}&
		\frac{\theta^p\Gamma(p+1)}{(\ln(n/k))^p}\left\{1+\frac{{Z_n^{(p)}(k)}}{\Gamma(p+1)\sqrt k}-\left(\frac{p(p+1)}{2\ln(n/k)}- \frac{p\, B\left(\ln (n/k)\right)}{\theta}
		\right)(1+{o_{_{\mathbb{P}}}}(1))\right\},
\end{eqnarray*}
{where $Z_n^{(\alpha)}(k):= \sqrt k \left(\frac{1}{k} \sum_{i=1}^k E_i^\alpha - \Gamma(\alpha+1)\right)$ is an {asymptotically  normal RV with mean 0 and variance $\Gamma(2\alpha+1)-\Gamma^2(\alpha+1)$} and $\frac{1}{k}\sum_{i=1}^k E_i^\alpha \overset {p} {\to} \Gamma(\alpha+1)$, for {$\alpha\geq 0$}. Then, 
considering {$p>-1$ and $p\neq 0$,} 
the following distributional representation holds, 
}
\begin{eqnarray*}
{\rm PM}_p(k) &\stackrel{d}{=} & \frac{\theta}{\ln(n/k)}\left\{1+\frac{{Z_n^{(p)}(k)}}{\Gamma(p+1)\sqrt k}-\left(\frac{p(p+1)}{2\ln(n/k)}-\frac{p\, B\left(\ln (n/k)\right)}{\theta}
\right)(1+{o_{_{\mathbb{P}}}}(1))\right\}^{1/p}\\
&\stackrel{d}{=} &\frac{\theta}{\ln(n/k)}\left\{1+{\left(\sqrt{\frac{\Gamma(2p+1)}{\Gamma^2(p+1)}-1}\right)\frac{\overline{Z}_n^{(p)}}{p\sqrt k}}-\left(\frac{p+1}{2\ln(n/k)}- \frac{B\left(\ln (n/k)\right)}{\theta}
\right)(1+{o_{_{\mathbb{P}}}}(1))\right\},\\
\end{eqnarray*}
{being $\overline{Z}_n^{(p)}$ an asymptotically standard normal RV.}
With $\widetilde\theta^{\rm GG}_p(k):=\ln(n/k) {\rm PM}_p(k)$, as in \eqref{WTC-PM-GG}, the asymptotic distributional representation in \eqref{WTC-MP-GG_dist} holds. 
\end{proof}

Next, we {summarize} 
the asymptotic properties of the WTC-estimator $\widetilde \theta^{\rm G}_p(k)$, in \eqref{WTC-PM-G}. 

\begin{theorem}\label{Theo-wtcpmG}
	Under a second-order framework, i.e., assuming that condition  \eqref{SOC} holds, with $k$ an intermediate sequence, as in \eqref{intermediate}, the WTC-estimator $\widetilde\theta^{G}_p(k)$, in \eqref{WTC-PM-G}, has an asymptotic distributional representation of the type,
\begin{equation}\label{WTC-MP-G_dist}
		\widetilde\theta^{\rm G}_p(k)\stackrel{d}{=} \theta \left\{1+{\left(\sqrt{\frac{\Gamma(2p+1)}{\Gamma^2(p+1)}-1}\right)\frac{\overline{Z}_n^{(p)}}{p\sqrt k}}+\left(\frac{B\left(\ln (n/k)\right)}{\theta}-\frac{p-1}{2\ln(n/k)} 
		\right)(1+{o_{_{\mathbb{P}}}}(1))\right\},\quad
  {p>-1,\quad p\neq 0}.
	\end{equation}
	\end{theorem}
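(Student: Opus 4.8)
The argument mirrors that of Theorem~\ref{Theo-wtcmopG}. The estimators $\widetilde\theta^{\rm G}_p(k)$ in \eqref{WTC-PM-G} and $\widetilde\theta^{\rm GG}_p(k)$ in \eqref{WTC-PM-GG} share the common numerator ${\rm PM}_p(k)$ and differ only through the deterministic normalizations $1/T_n$ and $\ln(n/k)$, where $T_n=\frac1k\sum_{i=1}^k\ln\ln((n+1)/i)-\ln\ln((n+1)/(k+1))$, so that $\widetilde\theta^{\rm G}_p(k)=\widetilde\theta^{\rm GG}_p(k)/(T_n\ln(n/k))$. The plan is, first, to exploit the asymptotic equivalence of Girard\cite{girard2004} already used to establish \eqref{G-GG}, namely $T_n\sim\frac{1}{\ln(n/k)}\bigl(1-\frac{1}{\ln(n/k)}\bigr)$ for intermediate $k$ (the replacement of $n$ by $n+1$ and of $k$ by $k+1$ being asymptotically immaterial), whence $T_n\ln(n/k)=1-1/\ln(n/k)+o(1/\ln(n/k))$, and, expanding $(1-x)^{-1}$,
\[
\widetilde\theta^{\rm G}_p(k)\stackrel{d}{=}\widetilde\theta^{\rm GG}_p(k)\Bigl(1+\frac{1}{\ln(n/k)}\Bigr)(1+o_{_{\mathbb{P}}}(1)),
\]
the exact PM-analogue of \eqref{G-GG}.

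The second step is to substitute the distributional representation \eqref{WTC-MP-GG_dist} of Theorem~\ref{Theo-wtcpmGG} and expand the product. The leading stochastic term $\bigl(\sqrt{\Gamma(2p+1)/\Gamma^2(p+1)-1}\bigr)\,\overline Z_n^{(p)}/(p\sqrt k)$ gets multiplied by $1+1/\ln(n/k)$; since $\overline Z_n^{(p)}=O_{_{\mathbb{P}}}(1)$, the extra contribution is $O_{_{\mathbb{P}}}\bigl(1/(\sqrt k\,\ln(n/k))\bigr)$, of smaller order than every retained term, hence absorbed. For the deterministic part one combines
\[
\frac{B(\ln(n/k))}{\theta}-\frac{p+1}{2\ln(n/k)}+\frac{1}{\ln(n/k)}=\frac{B(\ln(n/k))}{\theta}-\frac{p-1}{2\ln(n/k)},
\]
the cross terms $B(\ln(n/k))/\ln(n/k)$ and $1/\ln^2(n/k)$ again being negligible relative to the quantities kept (recall $|B|\in\mathcal{RV}_\beta$ with $\beta\le 0$). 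Collecting terms yields precisely \eqref{WTC-MP-G_dist}.

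The only delicate point — and the main, though mild, obstacle — is the order bookkeeping in this last expansion: one must check, under \eqref{intermediate} and \eqref{SOC}, that multiplying \eqref{WTC-MP-GG_dist} by $1+1/\ln(n/k)$ perturbs neither the $1/\sqrt k$ stochastic term nor the leading $1/\ln(n/k)$ bias term, so that only the additive shift $+1/\ln(n/k)$ in the bias survives. As a consistency check, at $p=1$ one has ${\rm PM}_1(k)\equiv{\rm H}(k)$ and $\widetilde\theta^{\rm G}_1(k)\equiv\widehat\theta^{\rm G}(k)$; the bias term reduces to $B(\ln(n/k))/\theta$ (since $p-1=0$) and the stochastic coefficient equals $\sqrt{\Gamma(3)/\Gamma^2(2)-1}=1$, recovering the representation of Theorem~\ref{Theo-wtcmopG}.
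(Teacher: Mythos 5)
Your proposal is correct and follows essentially the same route as the paper: the paper's proof of this theorem is a one-line appeal to the asymptotic relation \eqref{G-GG} (i.e.\ $\widetilde\theta^{\rm G}_p(k)\stackrel{d}{=}\widetilde\theta^{\rm GG}_p(k)\bigl(1+1/\ln(n/k)\bigr)(1+o_{_{\mathbb{P}}}(1))$ via Girard's equivalence for $T_n$) combined with the representation \eqref{WTC-MP-GG_dist} of Theorem~\ref{Theo-wtcpmGG}, exactly as you do. Your bias bookkeeping $-\tfrac{p+1}{2\ln(n/k)}+\tfrac{1}{\ln(n/k)}=-\tfrac{p-1}{2\ln(n/k)}$ and the $p=1$ consistency check are correct, and you in fact supply more detail than the paper does.
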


\begin{proof}
The proof follows straightforwardly from the asymptotic relation in equation \eqref{G-GG}.    
\end{proof}

\begin{corollary}\label{cor-wtcpm}
      Under the conditions of Theorems \ref{Theo-wtcpmGG} and \ref{Theo-wtcpmG}, and further assuming that 
 $\sqrt k B(\ln(n/k)) \to \lambda \in \mathbb{R}$ and $\sqrt k/ \ln(n/k) \to 0$, and with $\widetilde\theta^{\bullet}_p(k)$, {with $p>-1$, $p\neq 0$}, and $\bullet=\{GG,G\}$ generally denoting the estimators in \eqref{WTC-PM-GG} and \eqref{WTC-PM-G},
\begin{equation*}
	\sqrt k \left(\widetilde\theta^{\bullet}_p(k) -\theta\right) \xrightarrow[]{d} \mathcal{N}\left(\lambda, \left({\frac{\theta}{p}\sqrt{\frac{\Gamma(2p+1)}{\Gamma^2(p+1)}-1}}\right)^2\right).
\end{equation*}
\end{corollary}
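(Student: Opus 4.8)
The plan is to read the limit law straight off the distributional representations \eqref{WTC-MP-GG_dist} and \eqref{WTC-MP-G_dist} established in Theorems \ref{Theo-wtcpmGG} and \ref{Theo-wtcpmG}; once those are in hand, nothing is left but bookkeeping and a single application of Slutsky's lemma. I would treat the two cases $\bullet=\{{\rm GG},{\rm G}\}$ simultaneously, writing the common representation as
\begin{equation*}
\widetilde\theta^{\bullet}_p(k)\stackrel{d}{=}\theta\left\{1+\left(\sqrt{\frac{\Gamma(2p+1)}{\Gamma^2(p+1)}-1}\,\right)\frac{\overline{Z}_n^{(p)}}{p\sqrt k}+\left(\frac{B(\ln(n/k))}{\theta}-\frac{p+\varepsilon}{2\ln(n/k)}\right)(1+o_{_{\mathbb{P}}}(1))\right\},
\end{equation*}
with $\varepsilon=1$ for the ${\rm GG}$-estimator in \eqref{WTC-PM-GG} and $\varepsilon=-1$ for the ${\rm G}$-estimator in \eqref{WTC-PM-G}.

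First I would subtract $\theta$, multiply by $\sqrt k$, and regroup the deterministic remainder, obtaining
\begin{equation*}
\sqrt k\left(\widetilde\theta^{\bullet}_p(k)-\theta\right)\stackrel{d}{=}\frac{\theta}{p}\sqrt{\frac{\Gamma(2p+1)}{\Gamma^2(p+1)}-1}\;\overline{Z}_n^{(p)}+\left(\sqrt k\,B(\ln(n/k))-\frac{(p+\varepsilon)\,\theta}{2}\cdot\frac{\sqrt k}{\ln(n/k)}\right)(1+o_{_{\mathbb{P}}}(1)).
\end{equation*}
By the hypotheses $\sqrt k\,B(\ln(n/k))\to\lambda$ and $\sqrt k/\ln(n/k)\to0$, the bracketed factor converges (deterministically) to $\lambda$ and is therefore bounded, so multiplying it by $(1+o_{_{\mathbb{P}}}(1))$ produces a term converging in probability to $\lambda$. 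Recalling from the proof of Theorem \ref{Theo-wtcpmGG} that $\overline{Z}_n^{(p)}$ is asymptotically standard normal, the first summand converges in distribution to the centered normal law with variance $\left(\theta/p\right)^2\left(\Gamma(2p+1)/\Gamma^2(p+1)-1\right)$, so Slutsky's lemma yields
\begin{equation*}
\sqrt k\left(\widetilde\theta^{\bullet}_p(k)-\theta\right)\xrightarrow[]{d}\mathcal{N}\left(\lambda,\left(\frac{\theta}{p}\sqrt{\frac{\Gamma(2p+1)}{\Gamma^2(p+1)}-1}\right)^{2}\right),
\end{equation*}
the same limit for $\bullet={\rm GG}$ and $\bullet={\rm G}$, which is the assertion.

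The argument is essentially automatic, so the one point deserving a moment's attention --- and the closest thing here to an obstacle --- is checking that the $o_{_{\mathbb{P}}}(1)$ attached to the bias/rate term causes no harm after multiplication by $\sqrt k$. This is legitimate precisely because, under the stated assumptions, each of $\sqrt k\,B(\ln(n/k))$ and $\sqrt k/\ln(n/k)$ is individually $O(1)$, so no surviving factor of $\sqrt k$ gets amplified. It is worth noting that, in contrast with Corollary \ref{cor_wtcmop}, the condition $\sqrt k/\ln(n/k)\to0$ is genuinely needed for \emph{both} estimators here, since the representation \eqref{WTC-MP-G_dist} still carries a $1/\ln(n/k)$ term, which would disappear only in the Hill case $p=1$.
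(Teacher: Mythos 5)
Your proposal is correct and follows essentially the same route as the paper: both start from the distributional representations \eqref{WTC-MP-GG_dist} and \eqref{WTC-MP-G_dist}, subtract $\theta$, scale by $\sqrt k$, invoke $\sqrt k\,B(\ln(n/k))\to\lambda$ and $\sqrt k/\ln(n/k)\to 0$, and conclude via the asymptotic normality of $\overline{Z}_n^{(p)}$ and Slutsky. Your unified $\varepsilon=\pm1$ bookkeeping and the closing remark on why $\sqrt k/\ln(n/k)\to 0$ is needed for both estimators are accurate refinements of the same argument.
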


\begin{proof}
From the distributional representations in \eqref{WTC-MP-GG_dist} and \eqref{WTC-MP-G_dist}, 
\begin{equation*}
\sqrt k \left(\widetilde\theta^{\rm GG}_p(k) -\theta \right)\stackrel{d}{=} {\left(\sqrt{\frac{\Gamma(2p+1)}{\Gamma^2(p+1)}-1}\right){\frac{\theta \overline{Z}_n^{(p)}}{p}}}  +\left(\sqrt k B\left(\ln (n/k)\right)- \frac{\theta \sqrt k(p+1)}{2\ln(n/k)} \right)(1+{o_{_{\mathbb{P}}}}(1)),
\end{equation*}
and 
\begin{equation*}
\sqrt k \left(\widetilde\theta^{\rm G}_p(k) -\theta \right)\stackrel{d}{=} {\left(\sqrt{\frac{\Gamma(2p+1)}{\Gamma^2(p+1)}-1}\right){\frac{\theta \overline{Z}_n^{(p)}}{p}}}  +\left(\sqrt k B\left(\ln (n/k)\right)- \frac{\theta \sqrt k(p-1)}{2\ln(n/k)} 
\right)(1+{o_{_{\mathbb{P}}}}(1)).
\end{equation*}
Assuming that $\sqrt k B(\ln(n/k)) \to \lambda \in \mathbb{R}$ and $\sqrt k/ \ln(n/k) \to 0$, with {$\overline{Z}_n^{(p)}$} the standard normal  {RV} in \eqref{WTC-MP-GG_dist} and \eqref{WTC-MP-G_dist}, 
the result follows. 
\end{proof}

Next, we present without proof the asymptotic behaviour of $\widetilde\theta^{\rm GG}_p(k)$ and $\widetilde\theta^{\rm G}_p(k)$, in \eqref{WTC-PM-GG} and \eqref{WTC-PM-G}, for models with a function $B(t)$ given by $B(t)=\alpha\, t^{-1}$, $\alpha \in \mathbb{R}$.

\begin{corollary}\label{cor-wtcpm-new}
  Under the conditions of Theorems \ref{Theo-wtcpmGG} and \ref{Theo-wtcpmG}, 
    and further {assuming that}
  $B(t)=\alpha\, t^{-1}$, with $\alpha \in \mathbb{R}$, the WTC-estimators $\widetilde\theta^{\rm GG}_p(k)$ and $\widetilde\theta^{\rm G}_p(k)$\, 
  {with $p>-1$, and $p\neq 0$}, have an asymptotic distributional representation of the type,
	\begin{equation}\label{WTC-PM-GG-dist1}
    \widetilde\theta^{\rm GG}_p(k)\stackrel{d}{=} \theta \left\{1+{\left(\sqrt{\frac{\Gamma(2p+1)}{\Gamma^2(p+1)}-1}\right)\frac{\overline{Z}_n^{(p)}}{p\sqrt k}}+\frac{2\alpha -\theta(p+1)}{2 \theta \ln(n/k)} 
		(1+{o_{_{\mathbb{P}}}}(1))\right\},
    \end{equation}
 and 
 \begin{equation}\label{WTC-PM-G-dist1}
    \widetilde\theta^{\rm G}_p(k)\stackrel{d}{=} \theta \left\{1+{\left(\sqrt{\frac{\Gamma(2p+1)}{\Gamma^2(p+1)}-1}\right)\frac{\overline{Z}_n^{(p)}}{p\sqrt k}}+\frac{2\alpha-\theta(p-1)}{2 \theta \ln(n/k)} 
		(1+{o_{_{\mathbb{P}}}}(1))\right\}.
    \end{equation}
If the intermediate sequence $k$ is such that {$\sqrt k B(\ln(n/k)) \to \lambda \in \mathbb{R}$}, and with $\widetilde\theta^{\bullet}_p(k)$, $\bullet=\{GG,G\}$, then
		\begin{equation*}
			\sqrt k \left(\widetilde\theta^{\bullet}_p(k) -\theta\right) \xrightarrow[]{d} \mathcal{N}\left({\lambda\; b^\bullet(\theta,\alpha,p)}, \left({\frac{\theta}{p}\sqrt{\frac{\Gamma(2p+1)}{\Gamma^2(p+1)}-1}}\right)^2\right),
		\end{equation*}
  {with 
  $$b^\bullet(\theta,\alpha,p):=\left\{
  \begin{array}{cc}
  \frac{2\alpha -\theta(p+1)}{2\alpha}, & \bullet=GG,\\
  \frac{2\alpha -\theta(p-1)}{2\alpha}, & \bullet=G.
 \end{array}
  \right.$$}
\end{corollary}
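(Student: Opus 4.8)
The plan is to specialise the two distributional representations already proved in Theorems~\ref{Theo-wtcpmGG} and~\ref{Theo-wtcpmG}, namely \eqref{WTC-MP-GG_dist} and \eqref{WTC-MP-G_dist}, to the sub-class of models with $B(t)=\alpha\, t^{-1}$, and then to centre and rescale. First I would substitute $B(\ln(n/k))=\alpha/\ln(n/k)$ into the deterministic part of \eqref{WTC-MP-GG_dist}: the two terms $\frac{B(\ln(n/k))}{\theta}$ and $-\frac{p+1}{2\ln(n/k)}$ then share the common factor $1/\ln(n/k)$ and collapse into $\frac{2\alpha-\theta(p+1)}{2\theta\ln(n/k)}$; performing the same substitution in \eqref{WTC-MP-G_dist}, whose second deterministic term is $-\frac{p-1}{2\ln(n/k)}$, gives $\frac{2\alpha-\theta(p-1)}{2\theta\ln(n/k)}$. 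This yields at once the representations \eqref{WTC-PM-GG-dist1} and \eqref{WTC-PM-G-dist1}, with $\overline{Z}_n^{(p)}$ the same asymptotically standard normal RV as in the parent theorems. As an internal consistency check I would observe that setting $p=1$ (so that $\Gamma(2p+1)/\Gamma^2(p+1)-1=1$) reduces \eqref{WTC-PM-GG-dist1} and \eqref{WTC-PM-G-dist1} to the representations of $\widehat\theta^{\rm GG}_p(k)$ and $\widehat\theta^{\rm G}_p(k)$ in Corollaries~\ref{cor-wtcmopGG-new} and~\ref{cor-wtcmopG-new}.

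For the limit law I would argue exactly as in the proof of Corollary~\ref{cor-wtcpm}. Centring and multiplying by $\sqrt k$ in \eqref{WTC-PM-GG-dist1},
\begin{equation*}
\sqrt k\bigl(\widetilde\theta^{\rm GG}_p(k)-\theta\bigr)\stackrel{d}{=}\frac{\theta}{p}\sqrt{\frac{\Gamma(2p+1)}{\Gamma^2(p+1)}-1}\;\overline{Z}_n^{(p)}+\frac{\sqrt k\,\bigl(2\alpha-\theta(p+1)\bigr)}{2\ln(n/k)}\bigl(1+o_{_{\mathbb{P}}}(1)\bigr),
\end{equation*}
with the analogous identity for $\widetilde\theta^{\rm G}_p(k)$ obtained by replacing $p+1$ with $p-1$. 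The key step is that, since here $B(\ln(n/k))=\alpha/\ln(n/k)$, the hypothesis $\sqrt k\,B(\ln(n/k))\to\lambda$ is, for $\alpha\neq0$, equivalent to $\sqrt k/\ln(n/k)\to\lambda/\alpha$; hence the deterministic term converges to $\frac{\lambda}{\alpha}\cdot\frac{2\alpha-\theta(p+1)}{2}=\lambda\, b^{\rm GG}(\theta,\alpha,p)$, and the corresponding term for the G-estimator to $\lambda\, b^{\rm G}(\theta,\alpha,p)$. Since $\sqrt k/\ln(n/k)$ is then $O(1)$, the factor $1+o_{_{\mathbb{P}}}(1)$ multiplying the bias is harmless, and combining this with $\overline{Z}_n^{(p)}\xrightarrow[]{d}\mathcal{N}(0,1)$ via Slutsky's theorem gives the asserted convergence to $\mathcal{N}\bigl(\lambda\, b^{\bullet}(\theta,\alpha,p),\,(\frac{\theta}{p}\sqrt{\Gamma(2p+1)/\Gamma^2(p+1)-1})^2\bigr)$.

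I do not expect a genuine obstacle: the analytic labour (the distributional representation of $V_{ik}^p$, the Taylor expansions, and the identification of the asymptotically normal RV's $Z_n^{(\alpha)}(k)$) was already carried out in the proofs of Theorems~\ref{Theo-wtcpmGG} and~\ref{Theo-wtcpmG}, so this corollary is essentially bookkeeping built on those representations. The one point that deserves care is the translation between the two scaling conditions $\sqrt k\,B(\ln(n/k))\to\lambda$ and $\sqrt k/\ln(n/k)\to\lambda/\alpha$; in particular the statement tacitly assumes $\alpha\neq0$, the degenerate case $B(t)\equiv0$ forcing $\lambda=0$ but leaving $\sqrt k/\ln(n/k)$ uncontrolled and hence needing a separate treatment along the lines of the $B(t)=0$ parts of Corollaries~\ref{cor-wtcmopGG-new} and~\ref{cor-wtcmopG-new}.
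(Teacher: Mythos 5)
Your proposal is correct and follows exactly the route the paper intends: the corollary is stated without proof precisely because it is the immediate specialization of the representations \eqref{WTC-MP-GG_dist} and \eqref{WTC-MP-G_dist} to $B(t)=\alpha\,t^{-1}$, followed by the same centring-and-scaling argument as in Corollary \ref{cor-wtcpm}, with $\sqrt k\,B(\ln(n/k))\to\lambda$ translated into $\sqrt k/\ln(n/k)\to\lambda/\alpha$. Your remarks that the bias constant $b^{\bullet}(\theta,\alpha,p)$ tacitly requires $\alpha\neq 0$ (the case $B(t)\equiv 0$ being handled separately in Corollary \ref{cor-wtcpm-new2}) and that $p=1$ recovers Corollaries \ref{cor-wtcmopGG-new} and \ref{cor-wtcmopG-new} are both accurate.
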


The results presented in \eqref{WTC-PM-GG-dist1} and \eqref{WTC-PM-G-dist1} enable the determination of the optimal $p$-values, i.e., the values of $p$ such that the dominant component of the bias is null. So, for the WTC-estimators $\widetilde\theta^{\rm GG}_p(k)$ and $\widetilde\theta^{\rm G}_p(k)$, the optimal $p$-values, denoted by $p^{\rm GG}_{opt}$ and $p^{\rm G}_{opt}$ are:
\begin{equation*}
p^{\rm GG}_{opt}=\left\{p: p=\frac{2\alpha}{\theta}-1 \land p>0, \alpha \in \mathbb{R}, \theta>0\right\},
\end{equation*}
and
\begin{equation*}
p^{\rm G}_{opt}=\left\{p: p=\frac{2\alpha}{\theta}+1 \land p>0, \alpha \in \mathbb{R}, \theta>0\right\}.
\end{equation*}

\begin{remark}
For the Logistic model with  {CDF, $F(x)=1-\frac{1}{1+\exp(x)}$}, we have $\theta=1$ and $B(t)=-\ln 2/t$ ($\alpha=-\ln 2$), and so, is not possible to find optimal $p$-values for  $\widetilde\theta^{\rm GG}_p(k)$ and  $\widetilde\theta^{\rm G}_p(k)$. Considering the Gumbel model with CDF, $F(x)=\exp(-\exp(-(x-\mu)))$, $\mu \in \mathbb{R}$, we have $\theta=1$ and $B(t)=-\mu/t$ ($\alpha=-\mu$). For this model, $p^{GG}_{opt}$ can be computed for values of $\mu <-0.5$ (for instance, when $\mu=-1$, $p^{\rm GG}_{opt}=1$) and $p^{\rm G}_{opt}$ can be computed for values of $\mu <0.5$ (for instance, when $\mu=-1$, $p^{\rm G}_{opt}=3$). 
\end{remark}

\begin{corollary}\label{cor-wtcpm-new2}
  Under the conditions of Theorems \ref{Theo-wtcpmGG} and \ref{Theo-wtcpmG}, 
    and further assuming that
 $B(t)=0$, the WTC-estimators $\widetilde\theta^{\rm GG}_p(k)$ and $\widetilde\theta^{\rm G}_p(k)$ {with $p>-1$, and $p\neq 0$}, have an asymptotic distributional representation of the type,
	\begin{equation*}\label{WTC-PM-GG-dist0}
    \widetilde\theta^{\rm GG}_p(k)\stackrel{d}{=} \theta \left\{1+{\left(\sqrt{\frac{\Gamma(2p+1)}{\Gamma^2(p+1)}-1}\right)\frac{\overline{Z}_n^{(p)}}{p\sqrt k}}-\frac{p+1}{2\ln(n/k)} 
		(1+{o_{_{\mathbb{P}}}}(1))\right\},
    \end{equation*}
 and 
 \begin{equation*}\label{WTC-PM-G-dist0}
    \widetilde\theta^{\rm G}_p(k)\stackrel{d}{=} \theta \left\{1+{\left(\sqrt{\frac{\Gamma(2p+1)}{\Gamma^2(p+1)}-1}\right)\frac{\overline{Z}_n^{(p)}}{p\sqrt k}}-\frac{p-1}{2 \ln(n/k)} 
		(1+{o_{_{\mathbb{P}}}}(1))\right\}.
    \end{equation*}
If the intermediate sequence $k$ is such that{$\sqrt k /\ln(n/k)) \to \eta \in \mathbb{R}$}, and with $\widetilde\theta^{\bullet}_p(k)$, $\bullet=\{GG,G\}$, then
		\begin{equation*}
			\sqrt k \left(\widetilde\theta^{\bullet}_p(k) -\theta\right) \xrightarrow[]{d} \mathcal{N}\left(\eta\; b_0^\bullet(\theta, p), \left({\frac{\theta}{p}\sqrt{\frac{\Gamma(2p+1)}{\Gamma^2(p+1)}-1}}\right)^2\right),
		\end{equation*}
with 
  $$b_0^\bullet(\theta,p):=\left\{
  \begin{array}{cc}
  -\frac{\theta(p+1)}{2}, & \bullet=GG\\
  -\frac{\theta(p-1)}{2}, & \bullet=G.
 \end{array}
  \right.$$
\end{corollary}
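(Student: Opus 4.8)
The plan is to read off both distributional representations by setting $B\equiv0$ in Theorems~\ref{Theo-wtcpmGG} and~\ref{Theo-wtcpmG}, and then to obtain the two limit laws by a Slutsky-type argument, exactly along the lines of the proof of Corollary~\ref{cor-wtcpm}.

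First I would put $B(\ln(n/k))=0$ in the final representations \eqref{WTC-MP-GG_dist} and \eqref{WTC-MP-G_dist} furnished by Theorems~\ref{Theo-wtcpmGG} and~\ref{Theo-wtcpmG}: the term $B(\ln(n/k))/\theta$ then simply vanishes, which immediately yields the two displayed representations, with $\overline{Z}_n^{(p)}$ the same asymptotically standard normal RV as in those theorems. To be fully rigorous one should revisit the proof of Theorem~\ref{Theo-wtcpmGG}: every occurrence of $B(\ln(n/k))$ — which is already present only additively in \eqref{Vik^p_dist} and in the subsequent lines — is then identically zero, while the ${o_{_{\mathbb{P}}}}(1)$ remainders come solely from the Taylor expansions of $\ln(1+x)$ and $(1+x)^p$ in powers of $E_{k-i+1:k}/\ln(n/k)$ and from $E_{n-k:n}\sim\ln(n/k)$, none of which involves $B$; hence the argument carries over verbatim.

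Next, for the asymptotic normality I would multiply through by $\sqrt k$ and subtract $\theta\sqrt k$, obtaining, with the sign $+1$ taken for $\bullet=GG$ and $-1$ for $\bullet=G$,
\begin{equation*}
\sqrt k\left(\widetilde\theta^{\bullet}_p(k)-\theta\right)\stackrel{d}{=}\frac{\theta}{p}\sqrt{\frac{\Gamma(2p+1)}{\Gamma^2(p+1)}-1}\;\overline{Z}_n^{(p)}-\frac{\theta(p\pm1)}{2}\cdot\frac{\sqrt k}{\ln(n/k)}\,(1+{o_{_{\mathbb{P}}}}(1)).
\end{equation*}
Under the hypothesis $\sqrt k/\ln(n/k)\to\eta\in\mathbb{R}$ the deterministic bias term converges to $-\eta\,\theta(p\pm1)/2=\eta\, b_0^{\bullet}(\theta,p)$, while the first summand converges in distribution to $\mathcal{N}\!\left(0,(\theta/p)^2(\Gamma(2p+1)/\Gamma^2(p+1)-1)\right)$ since $\overline{Z}_n^{(p)}$ is asymptotically standard normal; Slutsky's theorem then delivers the stated limit. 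I do not foresee any genuine obstacle here — the statement is a literal corollary of the two theorems — the only mildly delicate point being the bookkeeping check in the previous paragraph, and that is essentially trivial because the $B$-dependent contributions enter the expansion of $V_{ik}^p$ only additively.
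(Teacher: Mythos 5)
Your proposal is correct and is exactly the argument the paper intends: the corollary is stated without proof precisely because it follows by setting $B\equiv 0$ in the representations of Theorems~\ref{Theo-wtcpmGG} and~\ref{Theo-wtcpmG} and then repeating the Slutsky-type computation used in the proof of Corollary~\ref{cor-wtcpm}, with the bias term now governed by $\sqrt k/\ln(n/k)\to\eta$. No gaps.
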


\begin{remark}\label{remark_wtcpmG}%
From the limit distribution presented in Corollary \ref{cor-wtcpm-new2}, it can be inferred that  $\widetilde\theta^{\rm G}_p(k)$ is asymptotically unbiased under the conditions of
$B(t)=0$ and 
$p=1$. This specific scenario holds true for the Exponential and Weibull distributions since both models have $B(t)=0$. Hence one expects in practice that the values of $\widetilde\theta^{\rm G}_p(k)$ will closely align with the true value of $\theta$  within a certain range of $k$-values. The extent of this range may differ, being either more extensive or more limited, depending on the relevance of the remaining higher-order bias terms.    
\end{remark}

\subsection{Asymptotic {MSE} of the estimators}

In this subsection we present the \textit{asymptotic} MSE (AMSE) of the WTC-estimators under study, defined in \eqref{WTC-Mop-GG}, \eqref{WTC-Mop-G}, \eqref{WTC-PM-GG} and \eqref{WTC-PM-G}, for models where the function $B(t)$, in \eqref{SOC}, can assume 
a generic expression, models with $B(t)=\alpha t^{-1}$, as the Logistic and Gumbel distributions, and models where $B(t)$ can be considered null, i.e., $B(t)=0$, as the Weibull and the standard Exponential models. With $\bar{\theta}_{p}^{\bullet}(k)$ generically denoting any of the aforementioned WTC-estimators, we can write
$$ {\rm AMSE(\bar{\theta}_{p}^{\bullet}(k))}=\left({\rm Bias}_\infty(\bar{\theta}_{p}^{\bullet}(k))\right)^2 + {\rm Var}_\infty(\bar{\theta}_{p}^{\bullet}(k)),$$
where ${\rm Bias}_\infty(\bar{\theta}_{p}^{\bullet}(k))$ denotes the asymptotic bias and ${\rm Var}_\infty(\bar{\theta}_{p}^{\bullet}(k))$ the asymptotic variance, being the latter given by the expression 
$${\rm Var}_\infty(\bar{\theta}_{p}^{\bullet}(k))=\frac{\theta^2}{k}v(p)=\frac{\theta^2}{k}\left\{ 
\begin{array}{cc}
1, & \bar{\theta}_{p}^{\bullet}(k)={\widehat \theta}_{p}^{\bullet}(k),\\[6pt]
\frac{1}{p^2}\left(\frac{\Gamma(2p+1)}{\Gamma^2(p+1)}-1\right), & \bar{\theta}_{p}^{\bullet}(k)={\widetilde \theta}_{p}^{\bullet}(k). 
\end{array}
\right.
$$
Therefore ${\rm Var}_\infty(\widetilde{\theta}_{p}^{\bullet}(k))\geq {\rm Var}_\infty(\widehat{\theta}_{p}^{\bullet}(k))$,  $\bullet=\{GG, G\}$, for values of $p>0$, being the equality achieved when $p=1$.

 The limiting representations presented in Theorems \ref{Theo-wtcmopGG} and \ref{Theo-wtcmopG}, and the results in Corollary \ref{cor_wtcmop} enable us to write the AMSE of the WTC-estimators in \eqref{WTC-Mop-GG}, \eqref{WTC-Mop-G}, generally denoted by $\widehat\theta^{\bullet}_p$, with $\bullet=\{GG,G\}$,  where $B(t)$ represents a generic function, by the following expression
\begin{equation*}\label{AMSE_wtcmop_geral}
{\rm AMSE}(\widehat \theta^{\bullet}_p)=\left(\frac{B(\ln(n/k))}{\theta} \right)^2+ \frac{\theta^2}{k}.
\end{equation*}
In the same lines, considering also models with a generic $B(t)$ function, from Theorems \ref{Theo-wtcpmGG} and \ref{Theo-wtcpmG} and the results in Corollary \ref{cor-wtcpm}, with $\widetilde\theta^{\bullet}_p$, $\bullet=\{GG,G\}$, generally denoting the WTC-estimators in \eqref{WTC-PM-GG} and \eqref{WTC-PM-G}, we get
 \begin{equation*}\label{AMSE_wtcpm_geral}
{\rm AMSE}(\widetilde \theta^{\bullet}_p)= 
   \left(\frac{B(\ln(n/k))}{\theta}\right)^2+  \frac{\theta^2}{k\,p^2}\left(\frac{\Gamma(2p+1)}{\Gamma^2(p+1)}-1\right).
\end{equation*}
For values of $p=1$, the ${\rm AMSE}(\widehat \theta^{\bullet}_1)={\rm AMSE}(\widetilde \theta^{\bullet}_1)$, $\bullet=\{GG,G\}$.

We now establish the AMSE of the models with function $B(t)=\alpha t^{-1}$, i.e., $B(\ln(n/k))=\alpha/\ln(n/k)$, $\alpha \in \mathbb{R}$, based on the results in  Corollaries \ref{cor-wtcmopGG-new} and \ref{cor-wtcmopG-new} for the WTC-estimators $\widehat \theta^{\bullet}_p$, and the results in Corollary \ref{cor-wtcpm-new} for the WTC-estimators $\widetilde \theta^{\bullet}_p$, $\bullet=\{GG,G\}$, 
\begin{equation}\label{AMSE_wtcmop1}
{\rm AMSE}(\widehat \theta^{\bullet}_p)=\left\{
\begin{array}{cc} 
   \left(\frac{\alpha-\theta}{\ln(n/k)} \right)^2+ \frac{\theta^2}{k},  &  \bullet=GG,\\[6pt]
  \left(\frac{\alpha}{\ln(n/k)} \right)^2+ \frac{\theta^2}{k},   &  \bullet=G,
    \end{array}
     \right.
\end{equation}
and 
\begin{equation}\label{AMSE_wtcpm1}
{\rm AMSE}(\widetilde \theta^{\bullet}_p)=\left\{
\begin{array}{cc} 
   \left(\frac{2\alpha-{\theta(p+1)}}{2 \ln(n/k)}  \right)^2+ \frac{\theta^2}{k\,p^2}\left(\frac{\Gamma(2p+1)}{\Gamma^2(p+1)}-1\right),  &  \bullet=GG,\\[6pt]
    \left(\frac{2\alpha-{\theta(p-1)}}{2 \ln(n/k)}  \right)^2+ \frac{\theta^2}{k\,p^2}\left(\frac{\Gamma(2p+1)}{\Gamma^2(p+1)}-1\right),  &  \bullet=G.
    \end{array}
     \right.
\end{equation}

\noindent
From \eqref{AMSE_wtcmop1}, we conclude that if $\theta =2\alpha$, the two estimators have the same AMSE and that AMSE($\widehat \theta^{\rm GG}_p$)<AMSE($\widehat \theta^{\rm G}_p$) when $\theta<2\alpha$. Since $\theta>0$ we need to have $\alpha >0$ in order to achieve this relation between the two AMSE’s. Similarly, from \eqref{AMSE_wtcpm1}, we can conclude that if $\theta=\frac{2\alpha}{p}$ the two estimators have the same AMSE, and that AMSE($\widetilde \theta^{\rm GG}_p$)<AMSE($\widetilde \theta^{\rm G}_p$) when $\theta<\frac{2\alpha}{p}$. Since $\theta>0$ the only way to obtain {such a relation} is when $\alpha$ and $p$ are both positive.

If we consider the results presented in Corollaries \ref{cor-wtcmopGG-new} and \ref{cor-wtcmopG-new} for the WTC-estimators $\widehat \theta^{\bullet}_p$, $\bullet=\{GG,G\}$, for models with $B(t)=0$ and $\theta > 0$ then we get the following AMSE,  
\begin{equation*}\label{AMSE_wtcmop0}
{\rm AMSE}(\widehat \theta^{\bullet}_p)=\left\{
\begin{array}{cc}
    \frac{\theta^2}{\ln^2(n/k)}+  \frac{\theta^2}{k} , & \bullet=GG,\\[6pt]
    \frac{\theta^2}{k} , & \bullet=G,
\end{array}
\right. 
\end{equation*}    
and from Corollary \ref{cor-wtcpm-new2} we obtain that 
\begin{equation*}\label{AMSE_wtcpm0}
{\rm AMSE}(\widetilde \theta^{\bullet}_p)= \left\{
\begin{array}{cc}
\left(\frac{\theta(p+1)}{2\ln(n/k)}\right)^2+\frac{\theta^2}{k\,p^2}\left(\frac{\Gamma(2p+1)}{\Gamma^2(p+1)}-1\right), & \bullet=GG,\\[6pt]
\left(\frac{\theta(p-1)}{2\ln(n/k)}\right)^2+\frac{\theta^2}{k\,p^2}\left(\frac{\Gamma(2p+1)}{\Gamma^2(p+1)}-1\right), & \bullet=G.\\
\end{array}
\right. 
\end{equation*}

\vspace{5pt}%
\noindent For the latter AMSE, we can also conclude that  AMSE($\widetilde \theta^{\rm G}_p$)<AMSE($\widetilde \theta^{\rm GG}_p$), for every $p>0$.

{
\begin{remark}
The results presented allowed for a comparison between WTC-estimators within the same class. However, comparing the AMSE of the estimators $\widehat\theta_p^{\bullet}(k)$ in \eqref{WTC-Mop-GG} and \eqref{WTC-Mop-G}, and $\widetilde\theta_p^{\bullet}(k)$ in \eqref{WTC-PM-GG} and \eqref{WTC-PM-G}—i.e., between the two classes—is more complex, given that each class of estimators has different asymptotic bias and asymptotic variances. An analysis similar to the one presented in \cite{HP1998}, among other papers, for the extreme value index, where an asymptotic comparison at optimal levels was performed could be put forward after some extra research. First, we would need to determine the optimal levels of the WTC-estimators under study, i.e., the values $k$ that minimize the AMSE of the WTC-estimators. Under a regular variation context, we would then define the limiting mean square error and a measure of asymptotic relative efficiency to compare WTC-estimators. 
\end{remark}
}

\vskip 1cm 
\section{Finite sample properties of the WTC-estimators}
We conducted an extensive Monte-Carlo simulation study  to assess the performance of the aforementioned WTC-estimators $\widehat\theta^{\rm GG}_p(k)$, $\widehat\theta^{\rm G}_p(k)$, $\widetilde\theta^{\rm GG}_p(k)$ and  $\widetilde\theta^{\rm G}_p(k)$   in 
\eqref{WTC-Mop-GG},  \eqref{WTC-Mop-G},  \eqref{WTC-PM-GG} and \eqref{WTC-PM-G}, respectively. %
The parameter $p$ values were chosen based on a preliminary simulation study. The value $p =1$ was consistently employed in $\widetilde\theta^{\rm G}_p(k)$ to facilitate a comparison with an established estimator from the existing literature.  This study has been carried out for simulated datasets from standard Exponential ($\theta=1$), Weibull(2,1) ($\theta=0.5$), Gamma(0.75,1) ($\theta=1$), Half-Normal ($\theta=0.5$), standard Gumbel ($\theta=1$) and Half-Logistic ($\theta=1$) distributions, under different sample sizes. The distributions here considered are partially provided in {Gardes and Girard} \cite{gardes2008estimation}, Table 1. Remark that in case of the Gumbel distribution, only the positive sample values are used to compute the estimates. %
All the computations were done using R software. %
Let $\bar{\theta}_{p}^{\bullet}(k)$ generically denote any of the aforementioned WTC-estimators. 
For each distribution, sample size and value of the parameter $p$, we generated 5000 samples and computed for the $i$-th sample the values of 
\begin{equation*}
\bar{\theta}_{p,i}^{\bullet}(k),\quad  k=1,\,2, \ldots, n-1,\quad i=1,\,2, \ldots, 5000.    
\end{equation*}
Next we obtained the Monte-Carlo estimates of the mean value (E)  
\begin{align}
{\rm E}[\bar{\theta}_{p}(k)]=\sum_{i=1}^{5000} \frac{\bar{\theta}_{p,i}(k)}{5000}, 
\qquad 1\leq k\leq n-1,
\label{simulE_E}%
\end{align}
and the RMSE,
\begin{align}
{\rm RMSE}[\bar{\theta}_{p}(k)]=\sqrt{\sum_{i=1}^{5000}\frac{(\bar{\theta}_{p,i}(k)-\theta)^2}{5000}},\qquad 1\leq k\leq n-1.
\label{simulE_RMSE}%
\end{align}
In addition, we have further computed the simulated  optimum level,
\begin{equation}
\hat{k}_{p,0}= 
\arg \min_{k} {\rm RMSE}[\bar{\theta}_{p}(k)],
\label{eq:osf}%
\end{equation}
and obtained
\begin{align}
{\rm E}[\bar{\theta}_p(\hat{k}_{p,0})] 
\quad \text{and}\quad 
{\rm RMSE}[\bar{\theta}_p(\hat{k}_{p,0})].
\label{simulE_RMSE_k0}%
\end{align}
The optimal level $k$ in \eqref{eq:osf} is not highly relevant in practice, but serves as an indicator of the optimal potential performance for each estimator, 
possibly achieved in practice through a suitable adaptive estimation approach.
However, the adaptive choice of an optimal value of $k$ is still an open problem. Additional details can be found in the references 
\cite{caballero2017technical,caeiro2016threshold,just2021theappropriate}.
Note that the simulated optimal sample fraction $\hat{k}_{p,0}/n$ is provided only in appendix.

\subsection{Mean values and {RMSE} patterns as $k$-functionals}\label{sec_mean_value_rmse_patterns}
In Figures \ref{fig:exp}--\ref{fig:hl}, we present the Monte-Carlo estimates of the mean value (left), and the RMSE (right),
with respect to $k$, for the four different classes of WTC-estimators.
The dashed line in the left plots indicates the true value of the parameter $\theta$. The performance of the estimators is evaluated based on the smoothness of the simulated mean value curve, close to the true value of $\theta$ and by a small RMSE in such a region. However, a low RMSE alone, in the absence of a stable region for the mean value estimates, might not be relevant in practical  applications. Based on these figures, we can draw the following conclusions:
\begin{itemize}
\item
The new estimator $\widetilde \theta^{\rm G}_p$ with an adequate choice of $p$ outperforms all other estimators in terms of bias and RMSE for almost all models considered in the paper. Such behaviour is achieved by employing $p=1$  for the Exponential and Weibull models, $p=2$ for the Gamma model, $p=3.5$ for the Half-Normal model and $p=0.25$ for the Gumbel and Half-Logistic models.
\item In practical applications, it is highly recommended to consider a range of values for the parameter $p$ and generate the associated sample paths of the estimates against $k$.
Next, the most suitable $p$ value can be determined by identifying the one which provides the highest stability for a larger range of k$-$values. References \cite{ghrfam2013,penalva2020lehmer,gomesetal2020reduced-bias} provide heuristic algorithms based on
sample path stability to select the tuning parameter $p$ and the threshold $k$.

\item As expected, for the standard Exponential and Weibull distributions, $\widetilde \theta^{\rm G}_1$ has near zero-bias for almost all $k$-values (see remark \ref{remark_wtcpmG}).
\item As can be noticed in  figure \ref{fig:gumbel}, the estimator $\widehat \theta^{\rm G}_{-2}$ behaves better with respect to the minimal RMSE. However $\widetilde \theta^{\rm G}_{0.25}$ provides a similar RMSE over an extended range of $k$-values.

\item For the Half-Logistic distribution, the smallest RMSE is provided by the $\widehat\theta^{\rm GG}_1(k)$ estimator. However such value is only achieved due to a change of the bias sign, as the value of $k$ increases. Thus  $\widehat\theta^{\rm GG}_1(k)$ is not relevant for this distribution, since the smallest RMSE is not associated to a stable region for the mean value estimates.
\end{itemize}

\begin{figure}[h!]
\includegraphics[width=0.45\textwidth]{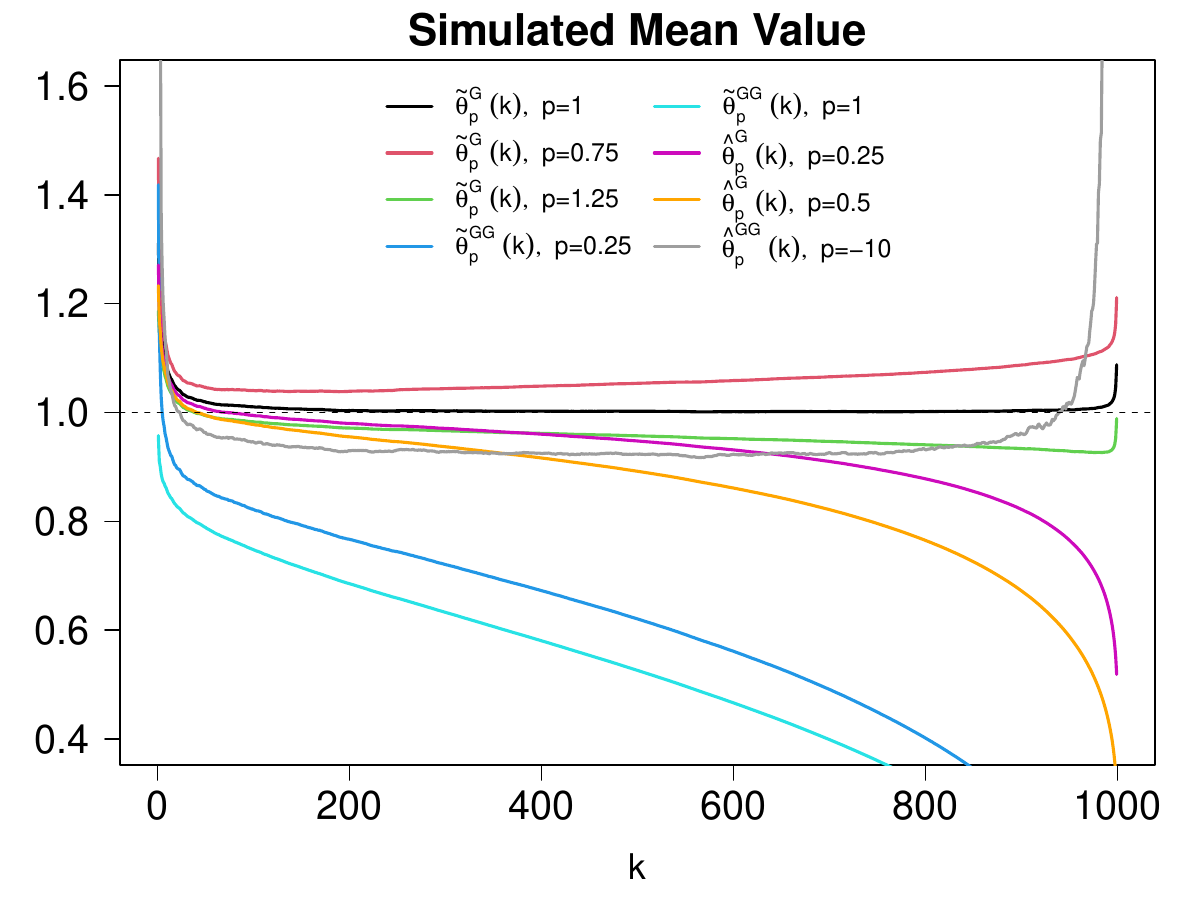}\includegraphics[width=0.45\textwidth]{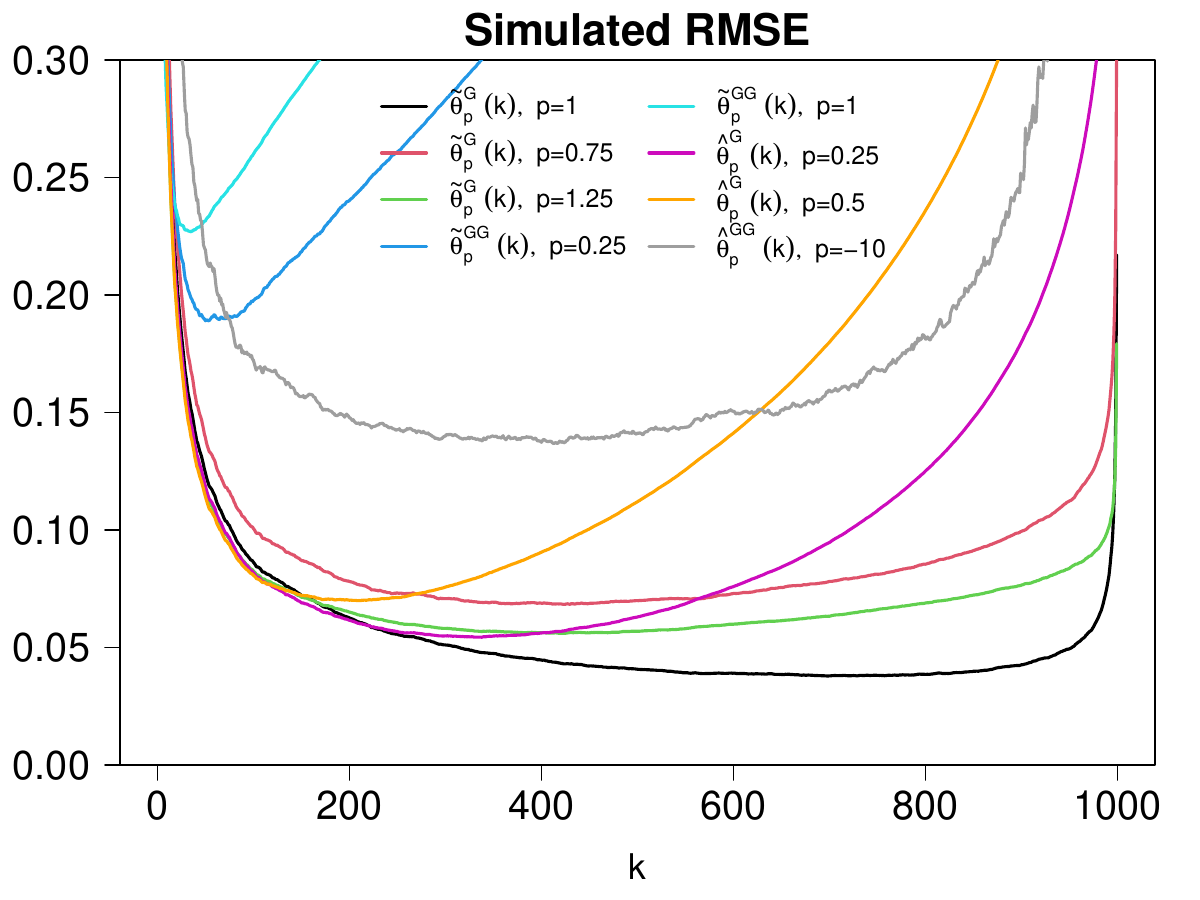}
\caption{Simulated mean values (left) and RMSE (right) for the Exponential model.}
\label{fig:exp}%
\end{figure}

\begin{figure}[h!]
\includegraphics[width=0.45\textwidth]{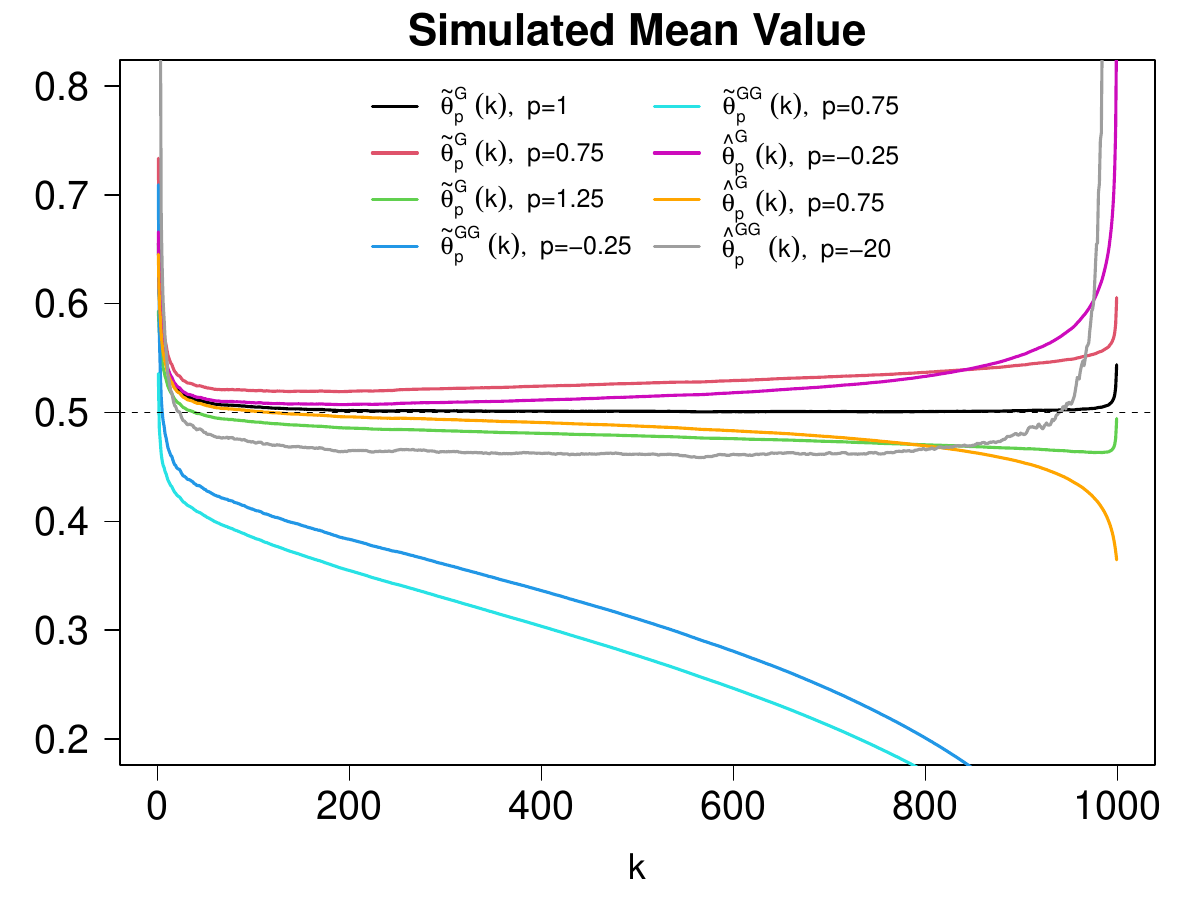}\includegraphics[width=0.45\textwidth]{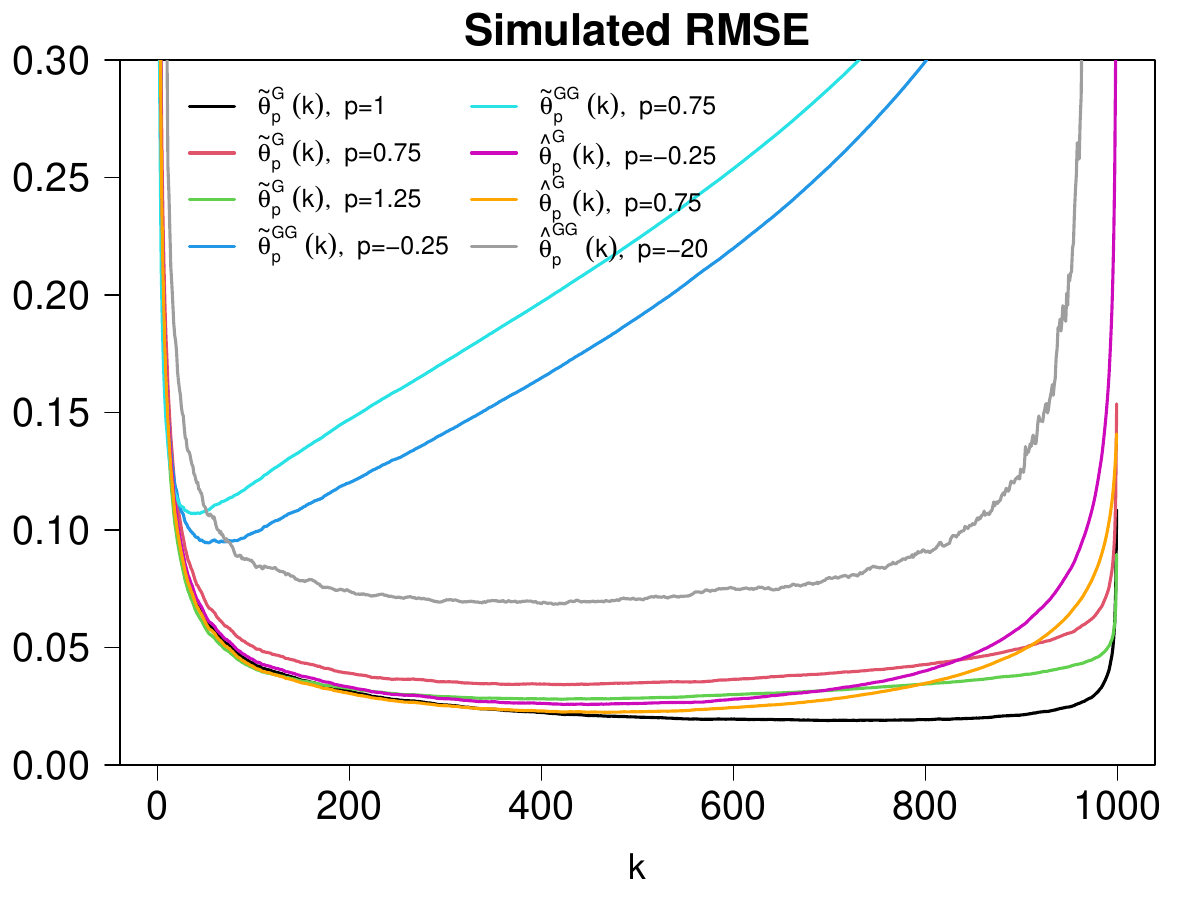}
\caption{Simulated mean values (left) and RMSE (right) for the Weibull(2,1) model.}
\end{figure}

\begin{figure}[h!]
\includegraphics[width=0.45\textwidth]{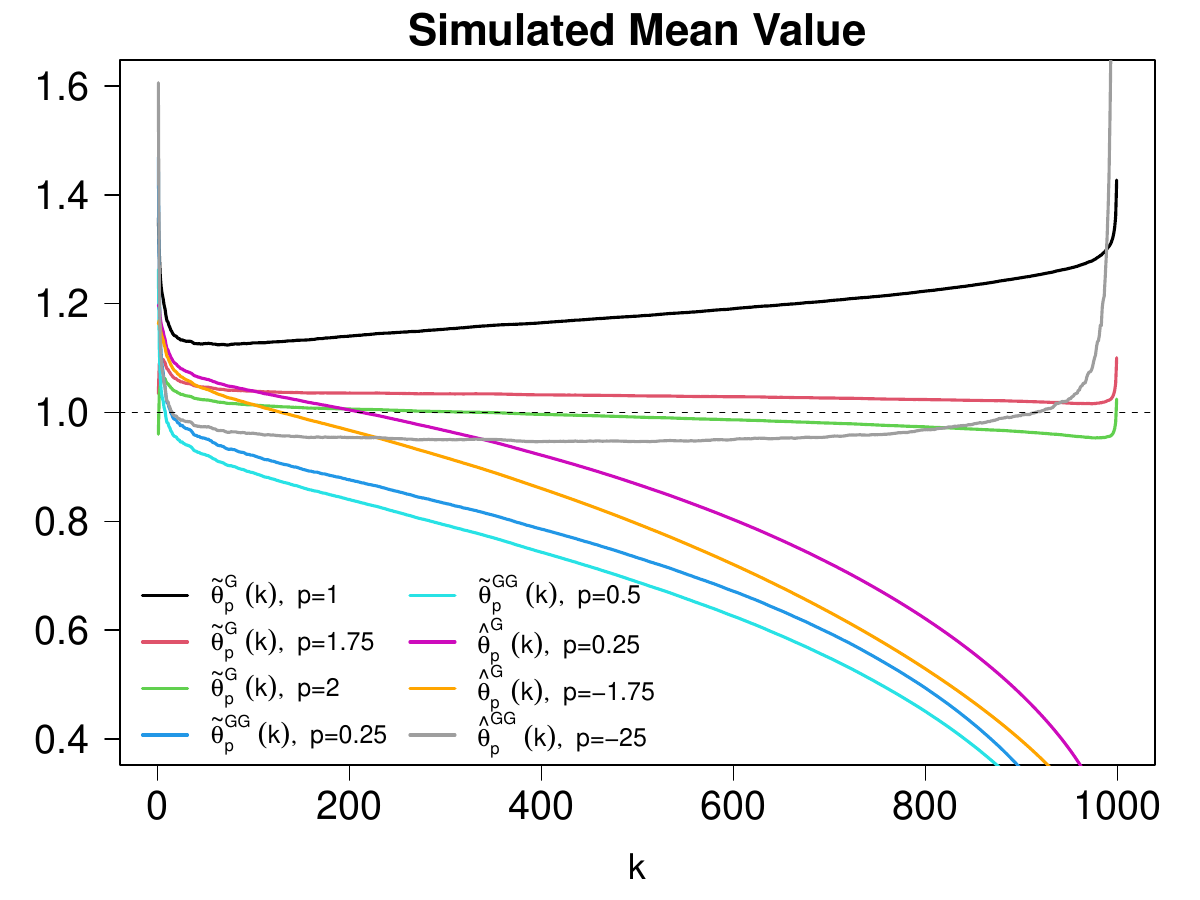}\includegraphics[width=0.45\textwidth]{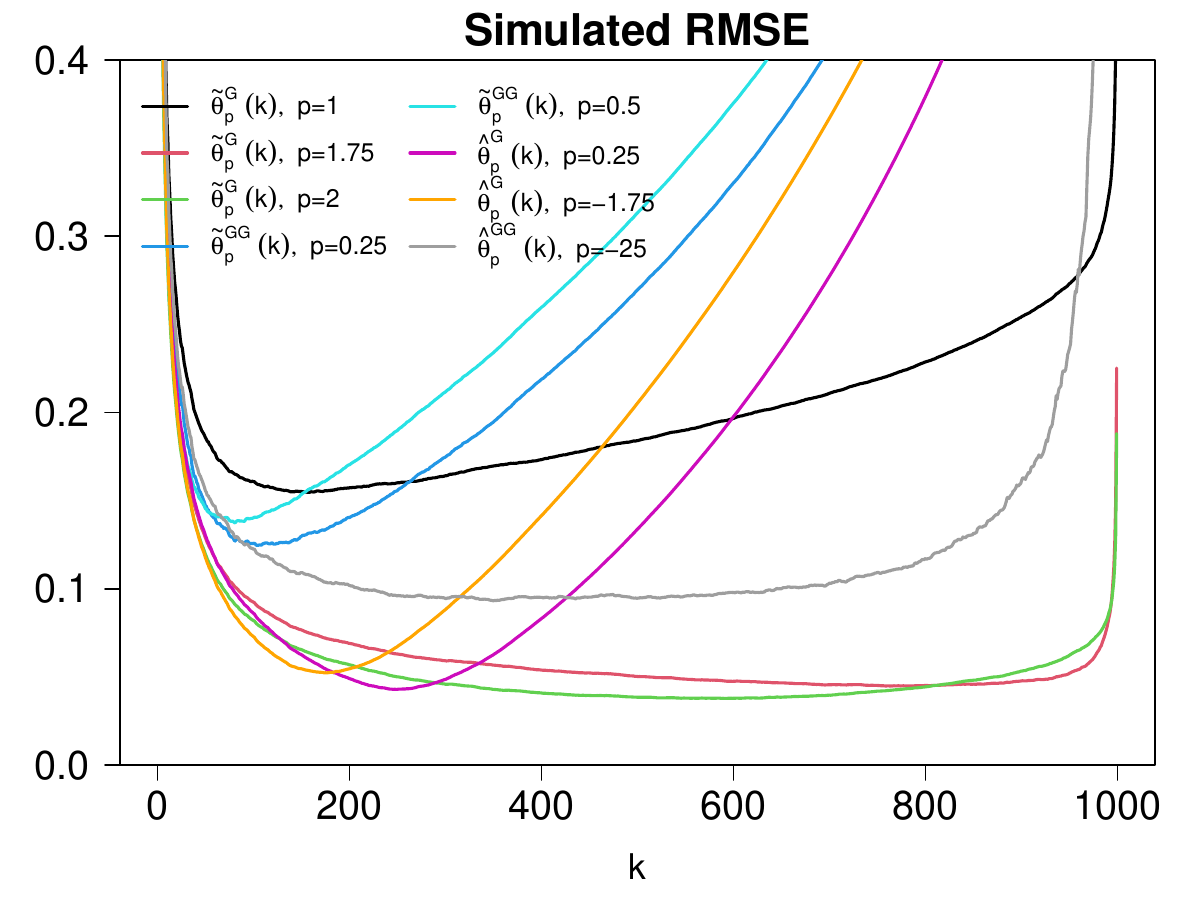}
\caption{Simulated mean values (left) and RMSE (right) for the Gamma(0.75,1) model.}
\label{fig:gamma075}%
\end{figure}

\begin{figure}[h!]
\includegraphics[width=0.45\textwidth]{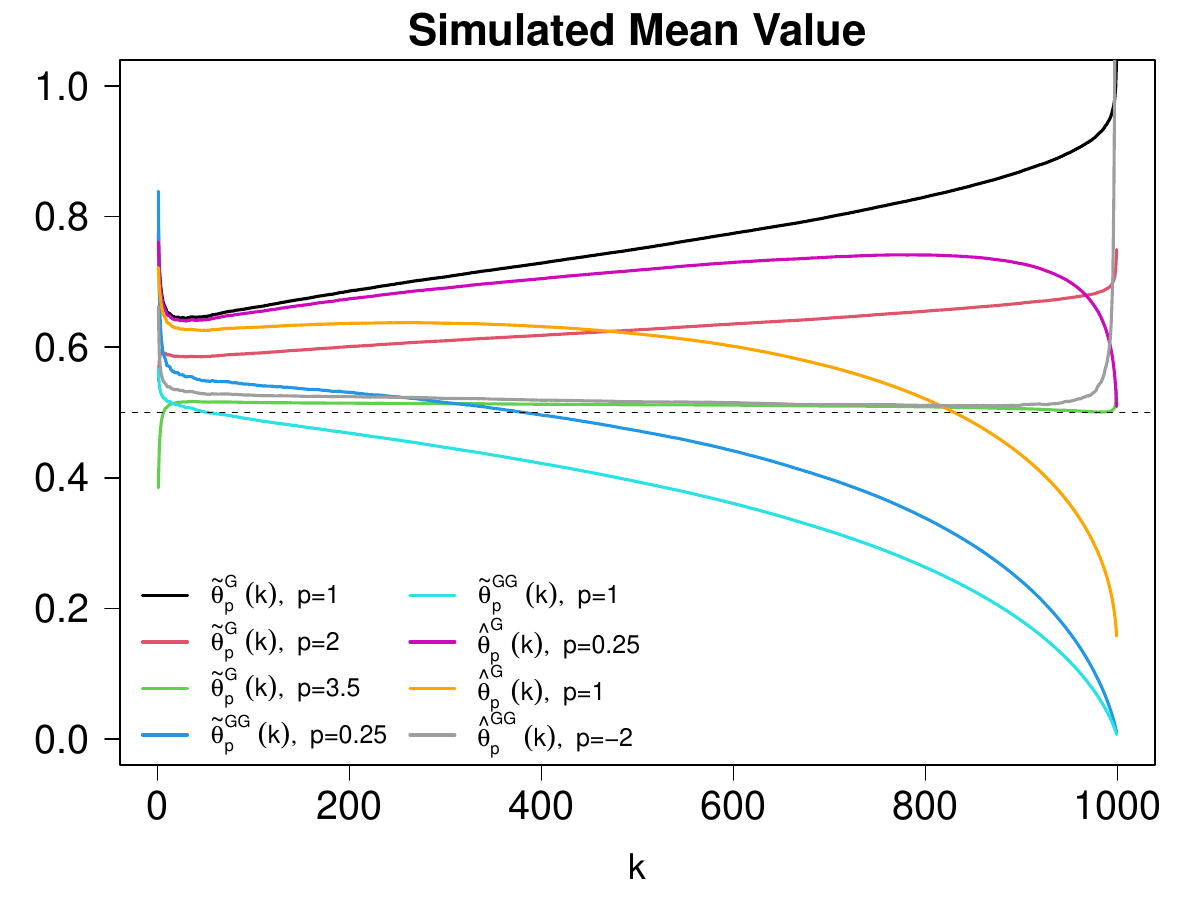}\includegraphics[width=0.45\textwidth]{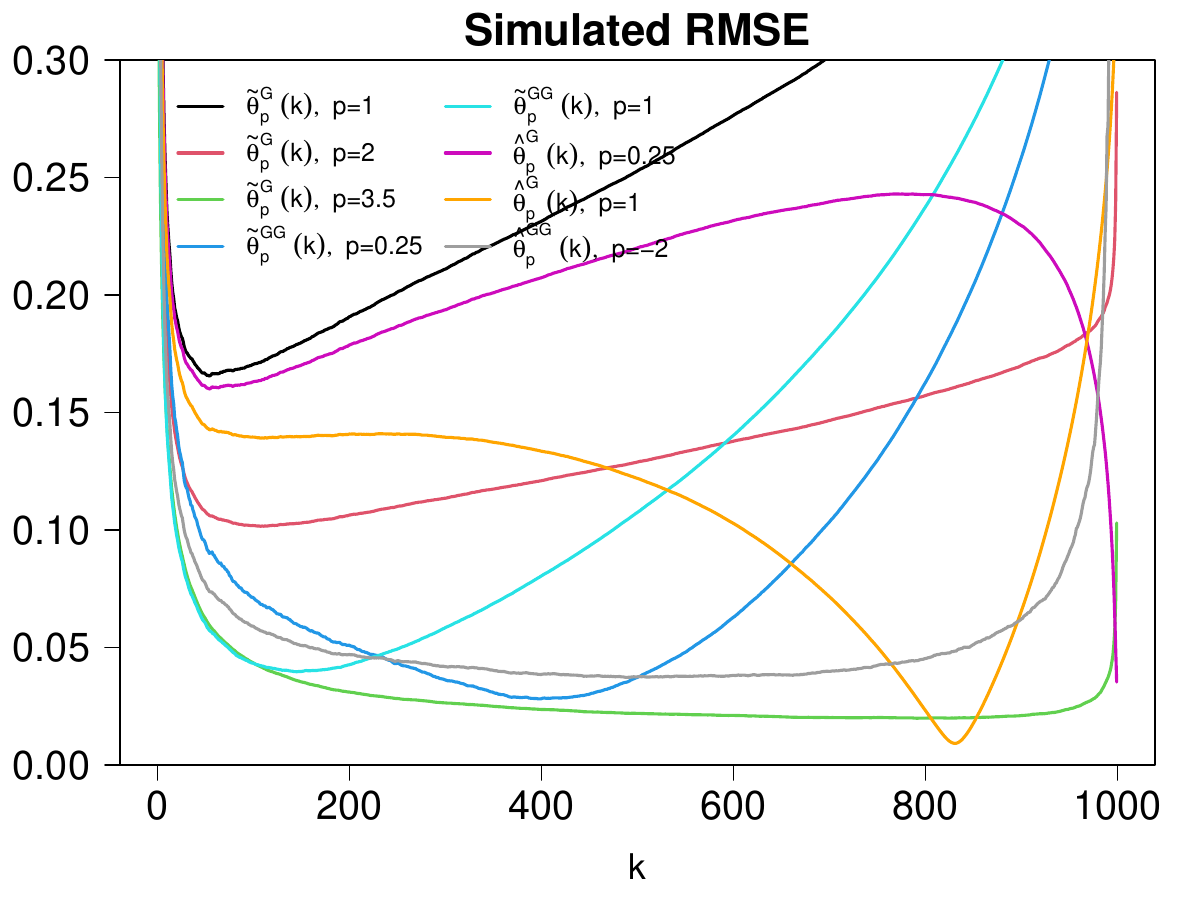}
\caption{Simulated mean values (left) and RMSE (right) for the Half-Normal model.}
\label{fig:fr025}%
\end{figure}

\begin{figure}[h!]
\includegraphics[width=0.45\textwidth]{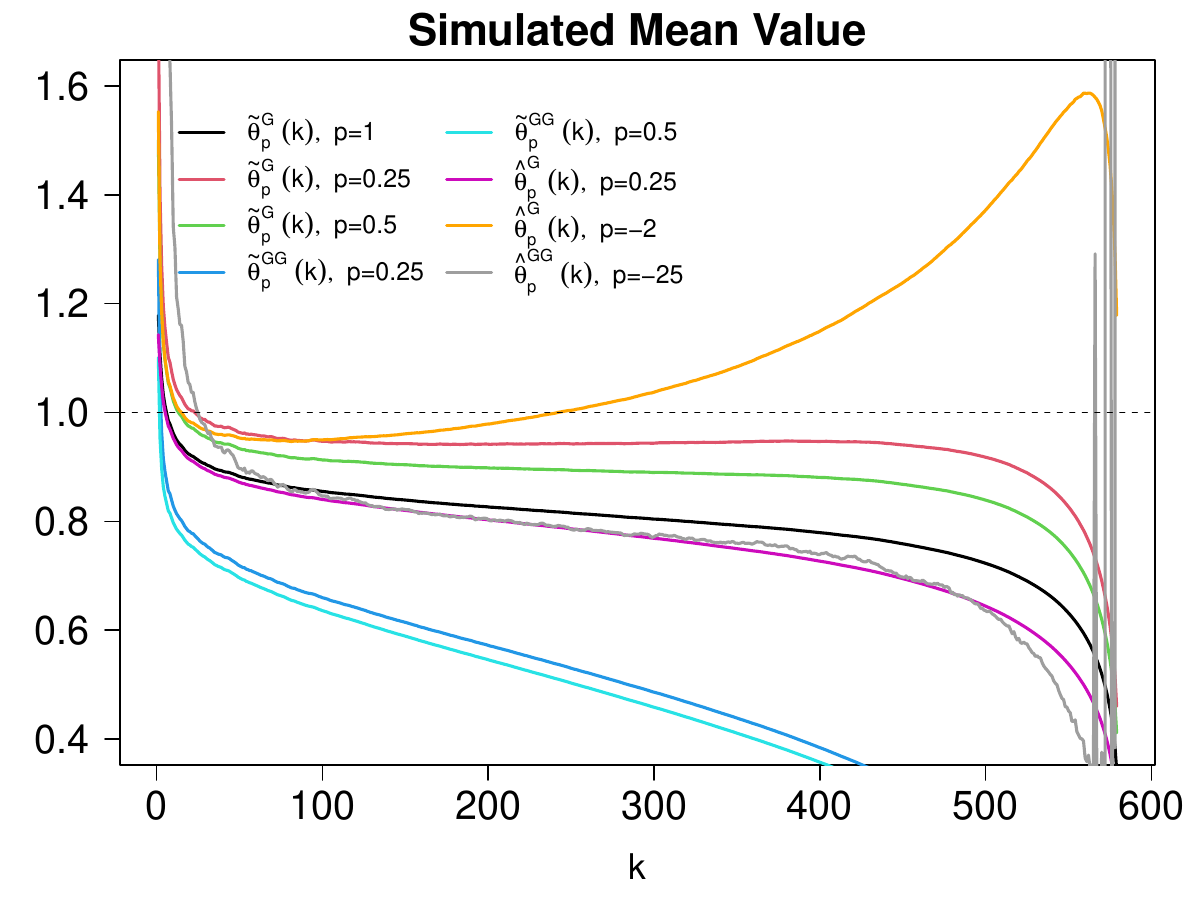}\includegraphics[width=0.45\textwidth]{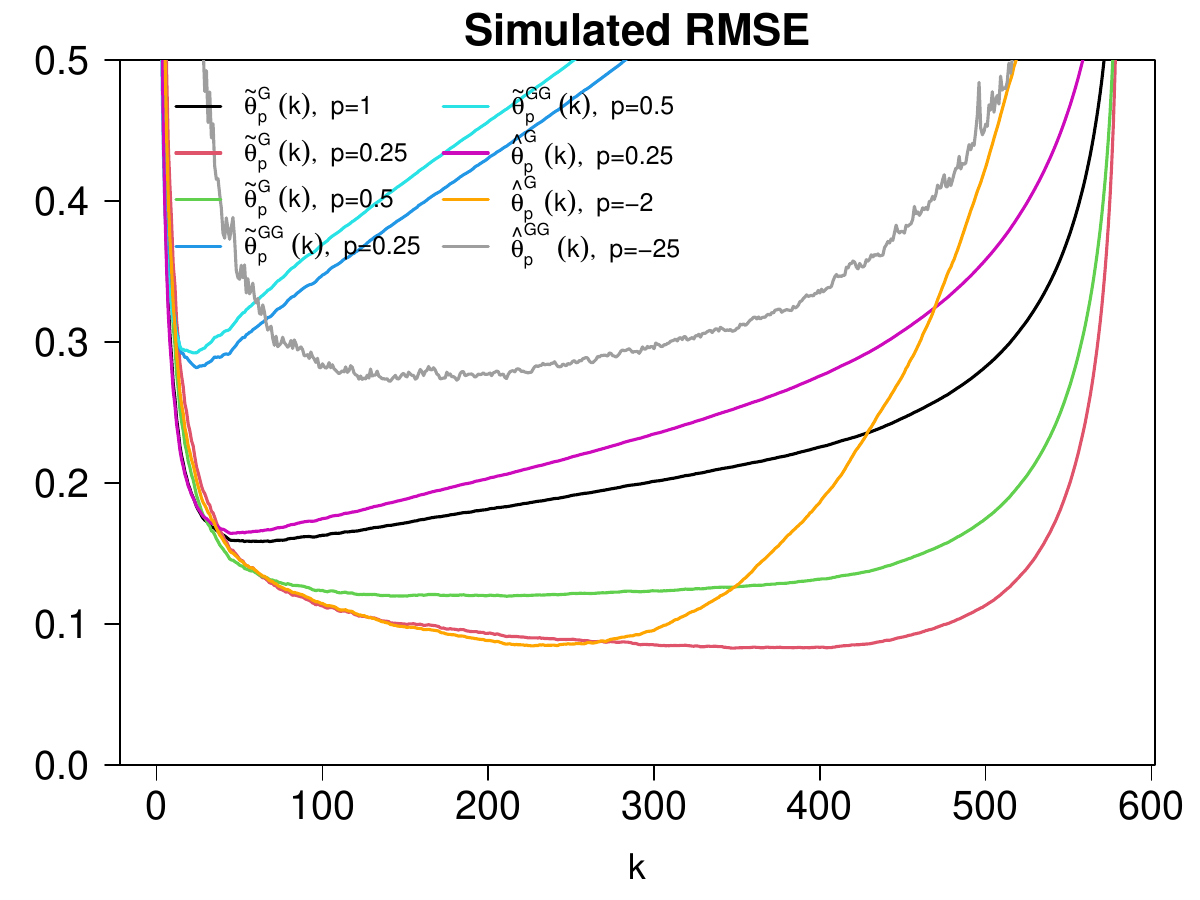}
\caption{Simulated mean values (left) and RMSE (right) for the standard Gumbel model.
}
\label{fig:gumbel}%
\end{figure}


\begin{figure}[h!]
\includegraphics[width=0.45\textwidth]{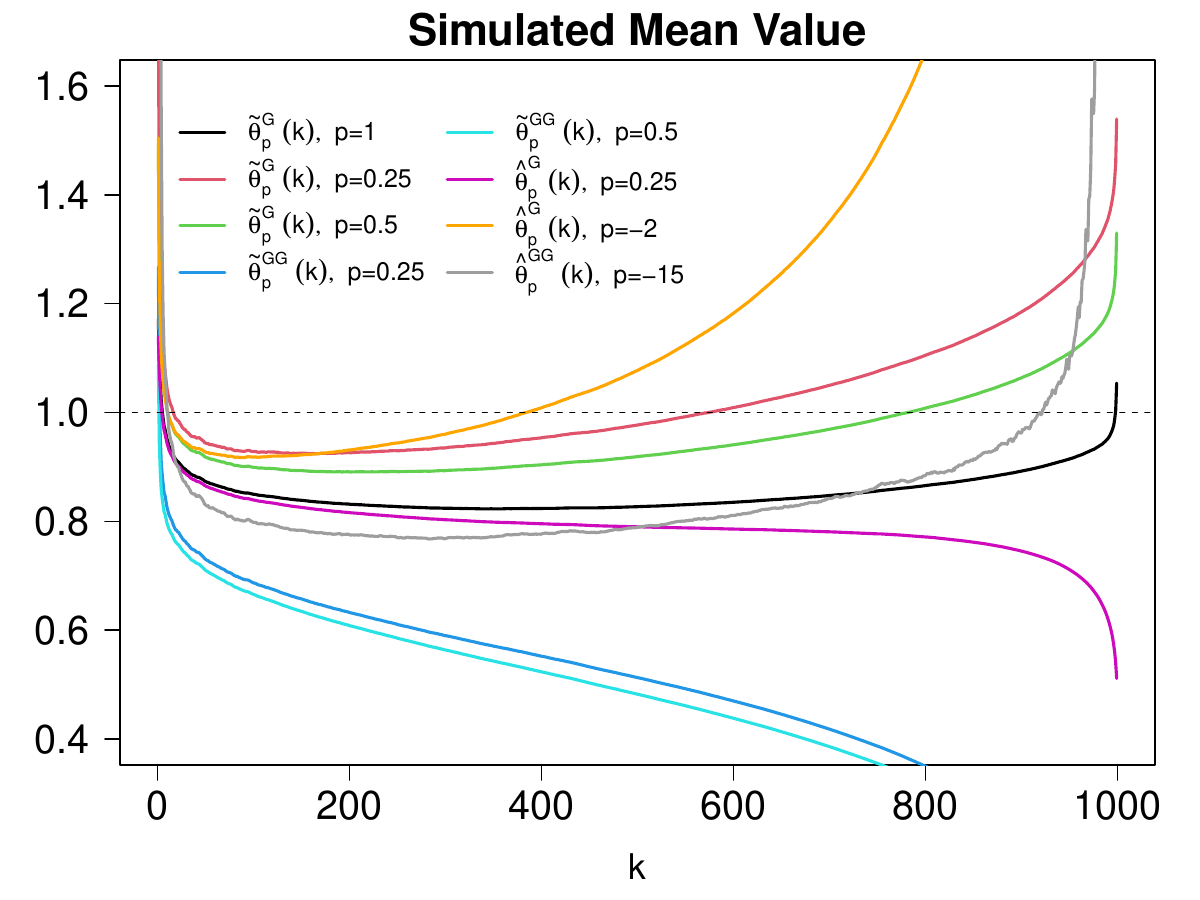}\includegraphics[width=0.45\textwidth]{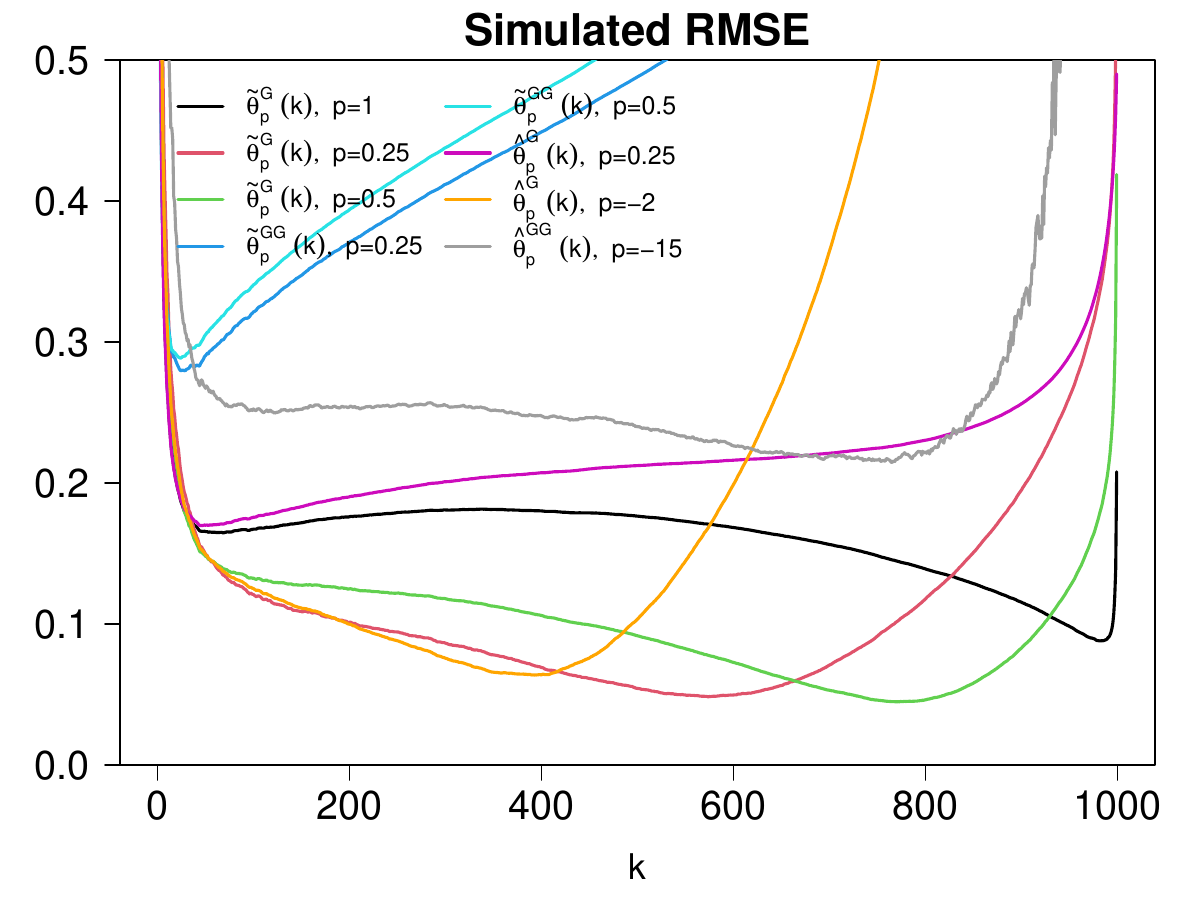}
\caption{Simulated mean values (left) and RMSE (right) for the Half-Logistic model.}
\label{fig:hl}%
\end{figure}

\subsection{Mean value and RMSE at optimal simulated levels}

In Tables \ref{tab_mv} and \ref{tab_rmse}, we present  the simulated values of the mean value (E) and  RMSE, both as given in \eqref{simulE_RMSE_k0}, of the class of estimators under study. For each model, the mean value closest to the target value $\theta$ and the smallest RMSE are highlighted in \textbf{bold}. In general, the results align with the conclusions already discussed in subsection \ref{sec_mean_value_rmse_patterns}. Discrepancies occur either due to similar performance of different estimators or due to the fact that the smallest RMSE is not always associated with the smallest absolute bias.

\begin{longtable}{lrrrrrrr}
\hline \multicolumn{8}{r}{(Continues)} \\ 
\endfoot

\hline 
\endlastfoot

\caption{Simulated mean value of $\widetilde\theta^{\rm G}_p$, $\widetilde\theta^{\rm GG}_p$, $\widehat\theta^{\rm G}_p$ and $\widehat\theta^{\rm GG}_p$},\ computed at the simulated optimal level \label{tab_mv} \\%
  \hline
 & 100 & 200 & 400 & 750 & 1000 & 1500 & 2000 \\ 
  \hline
  & \multicolumn{7}{c}{standard Exponential model\quad ($\theta=1$)}\\ 
  $\widetilde\theta^{\rm G}_p$, $p=1$ & \textbf{1.0137} & \textbf{1.0060} & \textbf{1.0046} & \textbf{1.0022} & \textbf{1.0017} & \textbf{1.0011} & \textbf{1.0007} \\ 
  $\widetilde\theta^{\rm G}_p$, $p=0.75$ & 1.0739 & 1.0623 & 1.0582 & 1.0502 & 1.0495 & 1.0437 & 1.0434 \\ 
  $\widetilde\theta^{\rm G}_p$, $p=1.25$ & 0.9582 & 0.9560 & 0.9584 & 0.9590 & 0.9608 & 0.9663 & 0.9649 \\ 
  $\widetilde\theta^{\rm GG}_p$, $p=0.25$ & 0.7973 & 0.8230 & 0.8356 & 0.8424 & 0.8586 & 0.8599 & 0.8664 \\ 
  $\widetilde\theta^{\rm GG}_p$, $p=1$ & 0.7564 & 0.7703 & 0.7797 & 0.8009 & 0.8054 & 0.8194 & 0.8228 \\ 
  $\widehat\theta^{\rm G}_p$, $p=0.25$ & 0.9408 & 0.9550 & 0.9607 & 0.9692 & 0.9669 & 0.9710 & 0.9725 \\ 
  $\widehat\theta^{\rm G}_p$, $p=0.5$ & 0.9220 & 0.9396 & 0.9431 & 0.9530 & 0.9534 & 0.9622 & 0.9629 \\ 
  $\widehat\theta^{\rm GG}_p$, $p=-10$ & 1.0798 & 0.9817 & 0.9440 & 0.9285 & 0.9242 & 0.9259 & 0.9196 \\  
  \hline%
  & \multicolumn{7}{c}{$Weibull$(2,1) model\quad ($\theta=0.5$)}  \\ 
  $\widetilde\theta^{\rm G}_p$, $p=1$ & \textbf{0.5068} & \textbf{0.5030} & \textbf{0.5023} & \textbf{0.5011} & \textbf{0.5009} & \textbf{0.5005} & \textbf{0.5003} \\ 
  $\widetilde\theta^{\rm G}_p$, $p=0.75$ & 0.5370 & 0.5312 & 0.5291 & 0.5251 & 0.5248 & 0.5219 & 0.5217 \\ 
  $\widetilde\theta^{\rm G}_p$, $p=1.25$ & 0.4791 & 0.4780 & 0.4792 & 0.4795 & 0.4804 & 0.4832 & 0.4824 \\ 
  $\widetilde\theta^{\rm GG}_p$, $p=0.25$ & 0.3986 & 0.4115 & 0.4178 & 0.4212 & 0.4293 & 0.4300 & 0.4332 \\ 
  $\widetilde\theta^{\rm GG}_p$, $p=0.75$ & 0.3853 & 0.3962 & 0.3985 & 0.4086 & 0.4104 & 0.4188 & 0.4208 \\ 
  $\widehat\theta^{\rm G}_p$, $p=-0.25$ & 0.5222 & 0.5197 & 0.5171 & 0.5136 & 0.5127 & 0.5104 & 0.5100 \\ 
  $\widehat\theta^{\rm G}_p$, $p=0.25$ & 0.4838 & 0.4853 & 0.4875 & 0.4886 & 0.4887 & 0.4899 & 0.4902 \\ 
  $\widehat\theta^{\rm GG}_p$, $p=-20$ & 0.5399 & 0.4908 & 0.4720 & 0.4642 & 0.4621 & 0.4630 & 0.4598 \\  
  \hline%
  & \multicolumn{7}{c}{$Gamma$(0.75,1) model\quad ($\theta=1$)}  \\ 
  $\widetilde\theta^{\rm G}_p$, $p=1$     & 1.1875 & 1.1684 & 1.1538 & 1.1376 & 1.1336 & 1.1285 & 1.1257 \\ 
  $\widetilde\theta^{\rm G}_p$, $p=1.75$  & 1.0375 & 1.0320 & 1.0292 & 1.0251 & 1.0241 & 1.0215 & 1.0215 \\ 
  $\widetilde\theta^{\rm G}_p$, $p=2$     & \textbf{0.9917} & \textbf{0.9855} & \textbf{0.9882} & 0.9838 & \textbf{0.9872} & 0.9878 & \textbf{0.9899} \\ 
  $\widetilde\theta^{\rm GG}_p$, $p=0.25$ & 0.8599 & 0.8908 & 0.8963 & 0.9025 & 0.9192 & 0.9189 & 0.9257 \\ 
  $\widetilde\theta^{\rm GG}_p$, $p=0.5$  & 0.8437 & 0.8592 & 0.8775 & 0.8935 & 0.9001 & 0.9075 & 0.9146 \\ 
  $\widehat\theta^{\rm G}_p$, $p=0.25$    & 0.9449 & 0.9648 & 0.9741 & \textbf{0.9870} & 0.9869 & \textbf{0.9895} & 0.9888 \\ 
  $\widehat\theta^{\rm G}_p$, $p=-1.75$   & 0.9269 & 0.9501 & 0.9653 & 0.9749 & 0.9794 & 0.9829 & 0.9861 \\ 
  $\widehat\theta^{\rm GG}_p$, $p=-25$    & 1.0078 & 0.9721 & 0.9580 & 0.9509 & 0.9505 & 0.9484 & 0.9474 \\ 
  \hline%
  & \multicolumn{7}{c}{Half-Normal model\quad ($\theta=0.5$)}  \\ 
  $\widetilde\theta^{\rm G}_p$, $p=1$     & 0.7050 & 0.6868 & 0.6702 & 0.6567 & 0.6478 & 0.6437 & 0.6403 \\
  $\widetilde\theta^{\rm G}_p$, $p=2$     & 0.6196 & 0.6114 & 0.6009 & 0.5932 & 0.5916 & 0.5876 & 0.5862 \\ 
  $\widetilde\theta^{\rm G}_p$, $p=3.5$   & 0.5146 & 0.5123 & 0.5116 & 0.5100 & 0.5088 & 0.5065 & 0.5072 \\ 
  $\widetilde\theta^{\rm GG}_p$, $p=0.25$ & 0.4738 & 0.4844 & 0.4937 & 0.4926 & 0.4967 & 0.4978 & 0.4978 \\ 
  $\widetilde\theta^{\rm GG}_p$, $p=1$    & 0.4548 & 0.4590 & 0.4716 & 0.4735 & 0.4797 & 0.4804 & 0.4854 \\ 
  $\widehat\theta^{\rm G}_p$, $p=0.25$    & 0.6444 & 0.5981 & 0.5577 & 0.5239 & 0.5092 & 0.4896 & 0.5031 \\ 
  $\widehat\theta^{\rm G}_p$, $p=1$       & \textbf{0.4922} & \textbf{0.4966} & \textbf{0.4983} & \textbf{0.4996} &\textbf{ 0.4993} & \textbf{0.4995} & \textbf{0.4996} \\ 
  $\widehat\theta^{\rm GG}_p$, $p=-2$     & 0.5361 & 0.5251 & 0.5189 & 0.5176 & 0.5166 & 0.5142 & 0.5145 \\ 
  \hline%
  & \multicolumn{7}{c}{standard Gumbel model\quad $\mu=0$ ($\theta=1$)}  \\ 
  $\widetilde\theta^{\rm G}_p$, $p=1$     & 0.7763 & 0.8148 & 0.8421 & 0.8684 & 0.8802 & 0.8839 & 0.8875 \\ 
  $\widetilde\theta^{\rm G}_p$, $p=0.25$  & 0.8379 & 0.8787 & 0.8959 & 0.9256 & 0.9460 & 0.9582 & 0.9801 \\ 
  $\widetilde\theta^{\rm G}_p$, $p=0.5$   & 0.8090 & 0.8626 & 0.8843 & 0.8974 & 0.8976 & 0.9082 & 0.9105 \\ 
  $\widetilde\theta^{\rm GG}_p$, $p=0.25$ & 0.6908 & 0.7197 & 0.7351 & 0.7623 & 0.7705 & 0.7812 & 0.7902 \\ 
  $\widetilde\theta^{\rm GG}_p$, $p=0.5$  & 0.6522 & 0.7112 & 0.7168 & 0.7452 & 0.7523 & 0.7713 & 0.7730 \\ 
  $\widehat\theta^{\rm G}_p$, $p=0.25$    & 0.7666 & 0.8247 & 0.8424 & 0.8667 & 0.8777 & 0.8764 & 0.8778 \\ 
  $\widehat\theta^{\rm G}_p$, $p=-2$      & \textbf{0.8762} & \textbf{0.9479} & \textbf{0.9778} & \textbf{0.9911} & \textbf{0.9915} & \textbf{1.0013} & \textbf{0.9973} \\ 
  $\widehat\theta^{\rm GG}_p$, $p=-25$    & 1.9545 & 0.9363 & 0.8233 & 0.8024 & 0.8222 & 0.8218 & 0.8248 \\ 
  \hline%
  & \multicolumn{7}{c}{Half-Logistic model\quad ($\theta=1$)}  \\  
  $\widetilde\theta^{\rm G}_p$, $p=1$     & 0.9024 & 0.9252 & 0.9359 & 0.9392 & 0.9397 & 0.9499 & 0.9556 \\ 
  $\widetilde\theta^{\rm G}_p$, $p=0.25$  & 1.0190 & \textbf{1.0048} & \textbf{1.0019} & 0.9974 & \textbf{1.0000} & 1.0005 & \textbf{1.0000} \\ 
  $\widetilde\theta^{\rm G}_p$, $p=0.5$   & 0.9742 & 0.9861 & 0.9952 & 0.9986 & 0.9959 & 0.9993 & 0.9987 \\ 
  $\widetilde\theta^{\rm GG}_p$, $p=0.25$ & 0.7039 & 0.7201 & 0.7421 & 0.7514 & 0.7635 & 0.7793 & 0.7805 \\ 
  $\widetilde\theta^{\rm GG}_p$, $p=0.5$  & 0.6897 & 0.7166 & 0.7354 & 0.7559 & 0.7519 & 0.7716 & 0.7658 \\ 
  $\widehat\theta^{\rm G}_p$, $p=0.25$    & 0.8109 & 0.8372 & 0.8402 & 0.8589 & 0.8709 & 0.8664 & 0.8779 \\ 
  $\widehat\theta^{\rm G}_p$, $p=-2$      & \textbf{1.0173} & 1.0143 & 1.0020 & \textbf{1.0006} & 1.0048 & \textbf{1.0003} & 1.0007 \\ 
  $\widehat\theta^{\rm GG}_p$, $p=-15$    & 0.9496 & 0.8442 & 0.8549 & 0.8365 & 0.8712 & 0.8582 & 0.8690 \\  
   \hline%
\end{longtable}   

\begin{longtable}{lrrrrrrr}
\hline \multicolumn{8}{r}{(Continues)} \\ 
\endfoot

\hline 
\endlastfoot

\caption{Simulated RMSE of $\widetilde\theta^{\rm G}_p$, $\widetilde\theta^{\rm GG}_p$, $\widehat\theta^{\rm G}_p$ and $\widehat\theta^{\rm GG}_p$}, computed at the simulated optimal level \label{tab_rmse} \\%
  \hline
 & 100 & 200 & 400 & 750 & 1000 & 1500 & 2000 \\ 
  \hline
  & \multicolumn{7}{c}{standard Exponential model\quad ($\theta=1$)}\\ 
  $\widetilde\theta^{\rm G}_p$, $p=1$  & \textbf{0.1223} & \textbf{0.0857} & \textbf{0.0604} & \textbf{0.0440} & \textbf{0.0379} & \textbf{0.0305} & \textbf{0.0269} \\ 
  $\widetilde\theta^{\rm G}_p$, $p=0.75$ & 0.1554 & 0.1152 & 0.0909 & 0.0739 & 0.0683 & 0.0612 & 0.0570 \\ 
  $\widetilde\theta^{\rm G}_p$, $p=1.25$ & 0.1185 & 0.0911 & 0.0716 & 0.0601 & 0.0560 & 0.0510 & 0.0482 \\ 
  $\widetilde\theta^{\rm GG}_p$, $p=0.25$ & 0.2799 & 0.2470 & 0.2170 & 0.1962 & 0.1890 & 0.1766 & 0.1706 \\ 
  $\widetilde\theta^{\rm GG}_p$, $p=1$ & 0.3130 & 0.2808 & 0.2574 & 0.2351 & 0.2269 & 0.2136 & 0.2061 \\ 
  $\widehat\theta^{\rm G}_p$, $p=0.25$ & 0.1189 & 0.0921 & 0.0719 & 0.0593 & 0.0544 & 0.0474 & 0.0443 \\ 
  $\widehat\theta^{\rm G}_p$, $p=0.5$ & 0.1330 & 0.1089 & 0.0890 & 0.0740 & 0.0699 & 0.0612 & 0.0573 \\ 
  $\widehat\theta^{\rm GG}_p$, $p=-10$ & 0.5601 & 0.2976 & 0.1997 & 0.1525 & 0.1368 & 0.1208 & 0.1131 \\ 
  \hline  
  & \multicolumn{7}{c}{$Weibull$(2,1) model \quad ($\theta=0.5$)}  \\ 
  $\widetilde\theta^{\rm G}_p$, $p=1$   & \textbf{0.0612} & \textbf{0.0429} & \textbf{0.0302} & \textbf{0.0220} & \textbf{0.0189} & \textbf{0.0153} & \textbf{0.0135} \\ 
  $\widetilde\theta^{\rm G}_p$, $p=0.75$   & 0.0777 & 0.0576 & 0.0454 & 0.0369 & 0.0342 & 0.0306 & 0.0285 \\ 
  $\widetilde\theta^{\rm G}_p$, $p=1.25$   & 0.0593 & 0.0455 & 0.0358 & 0.0301 & 0.0280 & 0.0255 & 0.0241 \\ 
  $\widetilde\theta^{\rm GG}_p$, $p=0.25$  & 0.1399 & 0.1235 & 0.1085 & 0.0981 & 0.0945 & 0.0883 & 0.0853 \\ 
  $\widetilde\theta^{\rm GG}_p$, $p=0.75$  & 0.1503 & 0.1338 & 0.1214 & 0.1109 & 0.1070 & 0.1008 & 0.0970 \\ 
  $\widehat\theta^{\rm G}_p$, $p=-0.25$     & 0.0723 & 0.0516 & 0.0381 & 0.0289 & 0.0258 & 0.0221 & 0.0198 \\ 
  $\widehat\theta^{\rm G}_p$, $p=0.25$     & 0.0565 & 0.0423 & 0.0318 & 0.0249 & 0.0224 & 0.0196 & 0.0178 \\ 
  $\widehat\theta^{\rm GG}_p$, $p=-20$    & 0.2801 & 0.1488 & 0.0999 & 0.0762 & 0.0684 & 0.0604 & 0.0565 \\  
  \hline  
  & \multicolumn{7}{c}{$Gamma$(0.75,1) model\quad ($\theta=1$)}  \\ 
  $\widetilde\theta^{\rm G}_p$, $p=1$  & 0.2470 & 0.2107 & 0.1839 & 0.1626 & 0.1547 & 0.1456 & 0.1408 \\ 
  $\widetilde\theta^{\rm G}_p$, $p=1.75$ & 0.1247 & 0.0895 & 0.0653 & 0.0495 & 0.0448 & 0.0384 & 0.0350 \\ 
  $\widetilde\theta^{\rm G}_p$, $p=2$  & 0.1115 & \textbf{0.0794} & \textbf{0.0568} & \textbf{0.0430} & \textbf{0.0378} & \textbf{0.0318} & \textbf{0.0279} \\ 
  $\widetilde\theta^{\rm GG}_p$, $p=0.25$ & 0.2180 & 0.1814 & 0.1549 & 0.1326 & 0.1246 & 0.1127 & 0.1044 \\ 
  $\widetilde\theta^{\rm GG}_p$, $p=0.5$ & 0.2284 & 0.1948 & 0.1688 & 0.1464 & 0.1374 & 0.1263 & 0.1178 \\ 
  $\widehat\theta^{\rm G}_p$, $p=0.25$    & \textbf{0.1087} & 0.0834 & 0.0644 & 0.0486 & 0.0429 & 0.0357 & 0.0316 \\ 
  $\widehat\theta^{\rm G}_p$, $p=-1.75$    & 0.1222 & 0.0956 & 0.0750 & 0.0584 & 0.0522 & 0.0440 & 0.0392 \\ 
  $\widehat\theta^{\rm GG}_p$, $p=-25$   & 0.2760 & 0.1869 & 0.1326 & 0.1034 & 0.0932 & 0.0832 & 0.0771 \\ 
  \hline
  & \multicolumn{7}{c}{Half-Normal model\quad ($\theta=0.5$)}  \\ 
  $\widetilde\theta^{\rm G}_p$, $p=1$  & 0.2456 & 0.2152 & 0.1906 & 0.1735 & 0.1655 & 0.1576 & 0.1517 \\  
  $\widetilde\theta^{\rm G}_p$, $p=2$  & 0.1503 & 0.1303 & 0.1157 & 0.1059 & 0.1016 & 0.0973 & 0.0941 \\ 
  $\widetilde\theta^{\rm G}_p$, $p=3.5$  & 0.0572 & 0.0410 & 0.0301 & 0.0226 & 0.0199 & 0.0169 & 0.0150 \\ 
  $\widetilde\theta^{\rm GG}_p$, $p=0.25$ & 0.0800 & 0.0592 & 0.0430 & 0.0321 & 0.0281 & 0.0230 & 0.0200 \\ 
  $\widetilde\theta^{\rm GG}_p$, $p=1$ & 0.0877 & 0.0698 & 0.0544 & 0.0443 & 0.0396 & 0.0346 & 0.0311 \\ 
  $\widehat\theta^{\rm G}_p$, $p=0.25$    & 0.1703 & 0.1186 & 0.0759 & 0.0450 & 0.0353 & 0.0308 & 0.0249 \\ 
  $\widehat\theta^{\rm G}_p$, $p=1$    & \textbf{0.0289} & \textbf{0.0207} & \textbf{0.0147} & \textbf{0.0108} & \textbf{0.0092} & \textbf{0.0076} & \textbf{0.0066} \\ 
  $\widehat\theta^{\rm GG}_p$, $p=-2$   & 0.1178 & 0.0814 & 0.0572 & 0.0420 & 0.0372 & 0.0313 & 0.0280 \\ 
   \hline
  & \multicolumn{7}{c}{Gumbel model \quad ($\theta=1$)}  \\ 
  $\widetilde\theta^{\rm G}_p$, $p=1$     & 0.2980 & 0.2357 & 0.1976 & 0.1702 & 0.1585 & 0.1483 & 0.1384 \\
  $\widetilde\theta^{\rm G}_p$, $p=0.25$  & \textbf{0.2627} & \textbf{0.1901} & 0.1441 & 0.1038 & \textbf{0.0829} & \textbf{0.0673} & \textbf{0.0515} \\ 
  $\widetilde\theta^{\rm G}_p$, $p=0.5$   & 0.2735 & 0.2033 & 0.1649 & 0.1335 & 0.1197 & 0.1116 & 0.0983 \\ 
  $\widetilde\theta^{\rm GG}_p$, $p=0.25$ & 0.4229 & 0.3667 & 0.3265 & 0.2970 & 0.2818 & 0.2673 & 0.2549 \\ 
  $\widetilde\theta^{\rm GG}_p$, $p=0.5$  & 0.4290 & 0.3734 & 0.3371 & 0.3052 & 0.2923 & 0.2755 & 0.2648 \\ 
  $\widehat\theta^{\rm G}_p$, $p=0.25$    & 0.3044 & 0.2436 & 0.2056 & 0.1770 & 0.1643 & 0.1544 & 0.1451 \\ 
  $\widehat\theta^{\rm G}_p$, $p=-2$      & 0.2807 & 0.1951 & \textbf{0.1339} & \textbf{0.0968} & 0.0845 & 0.0695 & 0.0593 \\ 
  $\widehat\theta^{\rm GG}_p$, $p=-25$   & 10.0605 & 0.9215 & 0.4022 & 0.3007 & 0.2722 & 0.2444 & 0.2299 \\ 
   \hline
  & \multicolumn{7}{c}{Half-Logistic model\quad ($\theta=1$)}  \\ 
  $\widetilde\theta^{\rm G}_p$, $p=1$  & 0.1608 & 0.1310 & 0.1092 & 0.0922 & 0.0880 & 0.0795 & 0.0751 \\ 
  $\widetilde\theta^{\rm G}_p$, $p=0.25$  & 0.1543 & 0.1095 & 0.0764 & 0.0564 & 0.0484 & 0.0391 & 0.0345 \\ 
  $\widetilde\theta^{\rm G}_p$, $p=0.5$  & \textbf{0.1395} & \textbf{0.0987} & \textbf{0.0699} & \textbf{0.0515} & \textbf{0.0449} & \textbf{0.0364} & \textbf{0.0318} \\ 
  $\widetilde\theta^{\rm GG}_p$, $p=0.25$ & 0.3641 & 0.3347 & 0.3091 & 0.2884 & 0.2795 & 0.2678 & 0.2582 \\ 
  $\widetilde\theta^{\rm GG}_p$, $p=0.5$ & 0.3747 & 0.3449 & 0.3180 & 0.2981 & 0.2885 & 0.2758 & 0.2675 \\ 
  $\widehat\theta^{\rm G}_p$, $p=0.25$    & 0.2210 & 0.2047 & 0.1896 & 0.1759 & 0.1698 & 0.1626 & 0.1571 \\ 
  $\widehat\theta^{\rm G}_p$, $p=-2$    & 0.2179 & 0.1488 & 0.1034 & 0.0736 & 0.0638 & 0.0519 & 0.0453 \\ 
  $\widehat\theta^{\rm GG}_p$, $p=-15$   & 0.5890 & 0.3313 & 0.2658 & 0.2297 & 0.2144 & 0.1948 & 0.1843 \\ 
   \hline
\end{longtable}   


\section{Conclusions}
In this paper we have introduced new classes of  estimators for the WTC parameter. These classes are constructed based on the generalized means  of the log-excesses or the relative excesses and incorporate a control parameter denoted as $p$. 
The results from the large-scale simulation experiment show that the  parameter $p$ enables us to have, in general, a near zero bias and a high stability of the estimates, as a function of the number $k$ of upper OS’s considered -— a crucial aspect for practical applicability. In addition, and with an adequate choice of the control parameter, the classes of  estimators exhibit  a reasonably flat RMSE pattern, as a function of $k$. Moreover, the results confirm improvements over existing methodologies. {Applications to real data are crucial, but are out of the scope of this article.}

\section*{Acknowledgments}
{
This work is funded by national funds through the FCT – Fundação para a Ciência e a Tecnologia, I.P., under the scope of the projects UIDB/00297/2020 (https://doi.org/10.54499/UIDB/00297/2020) and UIDP/00297/2020 (https://doi.org/10.54499/UIDP/00297/2020) (Center for Mathematics and Applications), 
project  UIDB/00006/2020 (https://doi.org/10.54499/UIDB/00006/2020) (CEAUL), project  UIDB/MAT/04674/2020 (https://doi.org/10.54499/UIDB/04674/2020) (CIMA). The first author is also financed by project 10.54499/CEECINST/00054/2018/CP1522/CT0003  (https://doi.org/10.54499/CEECINST/00054/2018/CP1522/CT0003).}
{The authors also acknowledge the constructive suggestions from the anonymous reviewers.}

\appendix

\section{Simulated optimal sample fraction\label{app1}}

This appendix contains in Table \ref{tab_osf} the simulated optimal sample fraction, i.e., the simulated optimum level in \eqref{eq:osf} divided by the sample size. The optimal sample fraction is presented in appendix to improve readability of the main text.

\begin{longtable}{rrrrrrrr}
\hline \multicolumn{8}{r}{(Continues)} \\ 
\endfoot

\hline 
\endlastfoot

\caption{Simulated optimal sample fraction (OSF) of $\widetilde\theta^{\rm G}_p$, $\widetilde\theta^{\rm GG}_p$, $\widehat\theta^{\rm G}_p$ and $\widehat\theta^{\rm GG}_p$}
\label{tab_osf} \\%
  \hline
 & 100 & 200 & 400 & 750 & 1000 & 1500 & 2000 \\ 
  \hline
  & \multicolumn{7}{c}{standard Exponential model\quad ($\theta=1$)}\\ 
  $\widetilde\theta^{\rm G}_p$, $p=1$  & 0.6800 & 0.6950 & 0.6700 & 0.7000 & 0.6980 & 0.6993 & 0.7055 \\ 
  $\widetilde\theta^{\rm G}_p$, $p=0.75$  & 0.6400 & 0.5800 & 0.5350 & 0.4320 & 0.4230 & 0.3273 & 0.3390 \\ 
  $\widetilde\theta^{\rm G}_p$, $p=1.25$  & 0.6800 & 0.5950 & 0.5275 & 0.4680 & 0.4200 & 0.2940 & 0.3040 \\ 
  $\widetilde\theta^{\rm GG}_p$, $p=0.25$ & 0.1900 & 0.1250 & 0.0925 & 0.0747 & 0.0500 & 0.0460 & 0.0370 \\ 
  $\widetilde\theta^{\rm GG}_p$, $p=1$ & 0.1100 & 0.0800 & 0.0625 & 0.0400 & 0.0350 & 0.0260 & 0.0225 \\ 
  $\widehat\theta^{\rm G}_p$, $p=0.25$    & 0.5900 & 0.4800 & 0.4225 & 0.3240 & 0.3380 & 0.2940 & 0.2675 \\ 
  $\widehat\theta^{\rm G}_p$, $p=0.5$    & 0.4100 & 0.3200 & 0.2875 & 0.2200 & 0.2110 & 0.1653 & 0.1580 \\ 
  $\widehat\theta^{\rm GG}_p$, $p=-10$   & 0.4400 & 0.3800 & 0.4325 & 0.3440 & 0.4130 & 0.2940 & 0.3860 \\   
  \hline%
  & \multicolumn{7}{c}{$Weibull$(2,1) model \quad ($\theta=0.5$)}  \\ 
  $\widetilde\theta^{\rm G}_p$, $p=1$  & 0.6800 & 0.6950 & 0.6700 & 0.7000 & 0.6980 & 0.6993 & 0.7055 \\ 
  $\widetilde\theta^{\rm G}_p$, $p=0.75$  & 0.6400 & 0.5800 & 0.5350 & 0.4320 & 0.4230 & 0.3273 & 0.3390 \\ 
  $\widetilde\theta^{\rm G}_p$, $p=1.25$  & 0.6800 & 0.5950 & 0.5275 & 0.4680 & 0.4200 & 0.2940 & 0.3040 \\ 
  $\widetilde\theta^{\rm GG}_p$, $p=0.25$ & 0.1900 & 0.1250 & 0.0925 & 0.0747 & 0.0500 & 0.0460 & 0.0370 \\ 
  $\widetilde\theta^{\rm GG}_p$, $p=0.75$ & 0.1300 & 0.0850 & 0.0700 & 0.0453 & 0.0390 & 0.0273 & 0.0225 \\ 
  $\widehat\theta^{\rm G}_p$, $p=-0.25$    & 0.5500 & 0.5800 & 0.5350 & 0.4653 & 0.4480 & 0.3800 & 0.3740 \\ 
  $\widehat\theta^{\rm G}_p$, $p=0.25$    & 0.6800 & 0.5950 & 0.5325 & 0.4733 & 0.4690 & 0.4200 & 0.4045 \\ 
  $\widehat\theta^{\rm GG}_p$, $p=-20$   & 0.4400 & 0.3800 & 0.4325 & 0.3440 & 0.4130 & 0.2940 & 0.3860 \\ 
  \hline%
  & \multicolumn{7}{c}{$Gamma$(0.75,1) model \quad ($\theta=1$)}  \\ 
  $\widetilde\theta^{\rm G}_p$, $p=1$ & 0.4400 & 0.3350 & 0.2550 & 0.1680 & 0.1580 & 0.1320 & 0.1170 \\ 
  $\widetilde\theta^{\rm G}_p$, $p=1.75$ & 0.6900 & 0.7000 & 0.7200 & 0.7720 & 0.7810 & 0.8287 & 0.8345 \\ 
  $\widetilde\theta^{\rm G}_p$, $p=2$ & 0.6800 & 0.7000 & 0.6075 & 0.6560 & 0.5880 & 0.5560 & 0.5185 \\ 
  $\widetilde\theta^{\rm GG}_p$, $p=0.25$ & 0.2900 & 0.2050 & 0.1700 & 0.1440 & 0.1030 & 0.1007 & 0.0855 \\ 
  $\widetilde\theta^{\rm GG}_p$, $p=0.5$ & 0.2400 & 0.1800 & 0.1300 & 0.0960 & 0.0810 & 0.0667 & 0.0555 \\ 
  $\widehat\theta^{\rm G}_p$, $p=0.25$ & 0.3600 & 0.3150 & 0.2875 & 0.2507 & 0.2490 & 0.2407 & 0.2415 \\ 
  $\widehat\theta^{\rm G}_p$, $p=-1.75$ & 0.2900 & 0.2450 & 0.2100 & 0.1853 & 0.1740 & 0.1647 & 0.1565 \\ 
  $\widehat\theta^{\rm GG}_p$, $p=-25$ & 0.3800 & 0.4600 & 0.4650 & 0.4947 & 0.3500 & 0.3440 & 0.3360 \\ 
  \hline%
  & \multicolumn{7}{c}{Half-Normal model\quad ($\theta=0.5$)}  \\
  $\widetilde\theta^{\rm G}_p$, $p=1$  & 0.2100 & 0.1550 & 0.1125 & 0.0760 & 0.0540 & 0.0480 & 0.0445 \\ 
  $\widetilde\theta^{\rm G}_p$, $p=2$  & 0.3100 & 0.2650 & 0.1700 & 0.1147 & 0.1110 & 0.0793 & 0.0745 \\ 
  $\widetilde\theta^{\rm G}_p$, $p=3.5$  & 0.6800 & 0.7050 & 0.6550 & 0.7293 & 0.7910 & 0.8660 & 0.8430 \\ 
  $\widetilde\theta^{\rm GG}_p$, $p=0.25$ & 0.5300 & 0.4750 & 0.4225 & 0.4187 & 0.3970 & 0.3920 & 0.3920 \\ 
  $\widetilde\theta^{\rm GG}_p$, $p=1$ & 0.3000 & 0.2600 & 0.1925 & 0.1787 & 0.1450 & 0.1400 & 0.1145 \\ 
  $\widehat\theta^{\rm G}_p$, $p=0.25$    & 0.9900 & 0.9950 & 0.9975 & 0.9987 & 0.9990 & 0.9993 & 0.9990 \\ 
  $\widehat\theta^{\rm G}_p$, $p=1$    & 0.8400 & 0.8350 & 0.8325 & 0.8307 & 0.8310 & 0.8307 & 0.8305 \\ 
  $\widehat\theta^{\rm GG}_p$, $p=-2$   & 0.5500 & 0.5000 & 0.5425 & 0.4680 & 0.4880 & 0.5840 & 0.5585 \\   
  \hline%
  & \multicolumn{7}{c}{standard Gumbel model\quad ($\theta=1$)}  \\
  $\widetilde\theta^{\rm G}_p$, $p=1$    & 0.1200 & 0.1200 & 0.0925 & 0.0640 & 0.0530 & 0.0453 & 0.0445 \\ 
  $\widetilde\theta^{\rm G}_p$, $p=0.25$ & 0.1900 & 0.2250 & 0.2675 & 0.3013 & 0.3480 & 0.4027 & 0.4145 \\ 
  $\widetilde\theta^{\rm G}_p$, $p=0.5$  & 0.1700 & 0.1600 & 0.1350 & 0.1387 & 0.2110 & 0.1293 & 0.3905 \\ 
  $\widetilde\theta^{\rm G}_p$, $p=0.25$ & 0.0600 & 0.0500 & 0.0400 & 0.0267 & 0.0240 & 0.0187 & 0.0160 \\ 
  $\widetilde\theta^{\rm G}_p$, $p=0.5$  & 0.0600 & 0.0400 & 0.0350 & 0.0240 & 0.0220 & 0.0153 & 0.0145 \\ 
  $\widehat\theta^{\rm G}_p$, $p=0.25$   & 0.1100 & 0.0850 & 0.0725 & 0.0520 & 0.0450 & 0.0427 & 0.0440 \\ 
  $\widehat\theta^{\rm G}_p$, $p=-2$     & 0.2200 & 0.2950 & 0.2925 & 0.2573 & 0.2270 & 0.2400 & 0.2155 \\ 
  $\widehat\theta^{\rm GG}_p$, $p=-25$   & 0.2200 & 0.2250 & 0.2025 & 0.1907 & 0.1410 & 0.1247 & 0.1175 \\ 
  \hline%
  & \multicolumn{7}{c}{Half-Logistic model\quad ($\theta=1$)}  \\
  $\widetilde\theta^{\rm G}_p$, $p=1$ & 0.8800 & 0.9450 & 0.9700 & 0.9800 & 0.9820 & 0.9900 & 0.9935 \\ 
  $\widetilde\theta^{\rm G}_p$, $p=0.25$ & 0.5800 & 0.5650 & 0.5725 & 0.5613 & 0.5740 & 0.5787 & 0.5790 \\ 
  $\widetilde\theta^{\rm G}_p$, $p=0.5$ & 0.6700 & 0.7300 & 0.7625 & 0.7773 & 0.7710 & 0.7827 & 0.7830 \\ 
  $\widetilde\theta^{\rm GG}_p$, $p=0.25$ & 0.1200 & 0.0850 & 0.0525 & 0.0387 & 0.0290 & 0.0193 & 0.0180 \\ 
  $\widetilde\theta^{\rm GG}_p$, $p=0.5$ & 0.1000 & 0.0600 & 0.0400 & 0.0240 & 0.0240 & 0.0153 & 0.0160 \\ 
  $\widehat\theta^{\rm G}_p$, $p=0.25$ & 0.3500 & 0.1600 & 0.1200 & 0.0640 & 0.0450 & 0.0427 & 0.0285 \\ 
  $\widehat\theta^{\rm G}_p$, $p=-2$ & 0.3400 & 0.3750 & 0.3750 & 0.3813 & 0.3930 & 0.3880 & 0.3900 \\ 
  $\widehat\theta^{\rm GG}_p$, $p=-15$ & 0.3900 & 0.3750 & 0.6300 & 0.6680 & 0.7650 & 0.7627 & 0.8075 \\ 
   \hline
\end{longtable}

\bibliography{references}

\end{document}